\documentclass[a4paper,12pt]{article}
\usepackage{amsmath}
\usepackage{amsthm}
\usepackage{amssymb}
\usepackage{mathrsfs}
\usepackage{enumerate}
\usepackage{url}
\usepackage[utf8]{inputenc}

\title{A relationship between three-dimensional Bessel processes in terms of Pitman's transformation}

\author{Yuu Hariya\thanks{Supported in part by JSPS KAKENHI Grant Number  22K03330}}

\date{\empty}
\numberwithin{equation}{section}

\theoremstyle{plain}

\newtheorem{theorem}{Theorem}[section]
\newtheorem{lemma}[theorem]{Lemma}
\newtheorem{proposition}[theorem]{Proposition}

\theoremstyle{definition}

\theoremstyle{remark}
\newtheorem{remark}[theorem]{Remark}

\begin{document}

\newcommand\ND{\newcommand}
\newcommand\RD{\renewcommand}

\ND\R{\mathbb{R}}

\ND\ind{\boldsymbol{1}}

\ND\ga{\gamma}
\ND\la{\lambda }
\ND\ve{\varepsilon}

\ND\lref[1]{Lemma~\ref{#1}}
\ND\tref[1]{Theorem~\ref{#1}}
\ND\pref[1]{Proposition~\ref{#1}}
\ND\sref[1]{Section~\ref{#1}}
\ND\ssref[1]{Subsection~\ref{#1}}
\ND\aref[1]{Appendix~\ref{#1}}
\ND\rref[1]{Remark~\ref{#1}} 
\ND\cref[1]{Corollary~\ref{#1}}
\ND\csref[1]{Corollaries~\ref{#1}}
\ND\eref[1]{Example~\ref{#1}}
\ND\fref[1]{Fig.\ {#1} }
\ND\lsref[1]{Lemmas~\ref{#1}}
\ND\tsref[1]{Theorems~\ref{#1}}
\ND\dref[1]{Definition~\ref{#1}}
\ND\psref[1]{Propositions~\ref{#1}}
\ND\rsref[1]{Remarks~\ref{#1}}
\ND\sssref[1]{Subsections~\ref{#1}}

\ND\pr{\mathbb{P}}
\ND\ex{\mathbb{E}}

\ND\eqd{\stackrel{(d)}{=}}

\ND\cm{\mathcal{M}}

\ND\cmb{\overline{\cm}}

\ND\cpb{\overline{\cp}}

\ND\cp{\mathcal{P}}
\ND\cq{\mathcal{Q}}

\ND\sg{\sigma}
\ND\ctd[2]{C([0,#1];\mathbb{R}^{#2})}

\ND\pbes[2]{\mathbb{P}^{(3)}_{#1,#2}}
\ND\ebes[2]{\mathbb{E}^{(3)}_{#1,#2}}

\ND\bb{\beta}

\ND\hB{\widehat{\beta}}
\ND\tB{\widetilde{B}}
\ND\scp{\mathscr{P}}
\ND\scq{\mathscr{Q}}
\ND\bes{\mathrm{BES}^{3}}

\ND\bd{\mathbf{B}}

\ND\cl{\mathcal{L}}

\ND{\rmid}[1]{\mathrel{}\middle#1\mathrel{}}

\def\thefootnote{{}}

\maketitle 
\begin{abstract}
For every $x\ge 0$, let $R^{x}=\{ R^{x}_{t}\} _{t\ge 0}$ be a three-dimensional 
Bessel process started at $x$. Given $x>0$, with the aid of Pitman's theorem, 
a path transformation $T_{x}$ under which the process $T_{x}(R^{y})$ is distributed as 
$R^{x}$ for any $y>0$ is constructed. In fact, we prove that, 
for every $x,y>0$, the two three-dimensional processes 
\begin{align*}
 \bigl( R^{x},T_{y}(R^{x}),\mathcal{P}(-R^{x})+x\bigr) && \text{and} && 
 \bigl( T_{x}(R^{y}),R^{y},\mathcal{P}(-R^{y})+y\bigr) 
\end{align*}
are identical in law, where $\mathcal{P}$ is Pitman's transformation. In the course of 
our exploration, it will be revealed that, for any $x>0$,  
the process $\mathcal{P}(-R^{x})+x$ has the same law as the standard three-dimensional 
Bessel process $R^{0}$, which is then applied to derive a disintegration formula 
for the law of $R^{x}$ in terms of Pitman's transformation. An application of the 
formula to three-dimensional Bessel bridges is also presented.
\footnote{Mathematical Institute, Tohoku University, Aoba-ku, Sendai 980-8578, Japan}
\footnote{E-mail: hariya@tohoku.ac.jp}
\footnote{{\itshape Keywords and Phrases}: {three-dimensional Bessel process}; {Pitman's theorem}; {three-dimensional Bessel bridge}}
\footnote{{\itshape MSC 2020 Subject Classifications}:~Primary~{60J65}; Secondary~{60J60}, {60G17}}
\end{abstract}

\section{Introduction and main result}\label{;intro}

Along with Brownian motion, the three-dimensional Bessel process is a 
fundamental family of stochastic processes in probability theory. It is 
defined as the modulus of a three-dimensional Brownian motion and its 
pinned process, namely a three-dimensional Bessel bridge, started at 
and conditioned to end at the origin at a prescribed time, is a key ingredient 
in the excursion theory of one-dimensional Brownian motion; see, e.g., 
\cite[Sections~VI.3 and XII.2]{ry} and references therein. Let $B=\{ B_{t}\} _{t\ge 0}$ 
be a one-dimensional standard Brownian motion. Perhaps the most striking 
feature of the three-dimensional Bessel process would be the well-known fact 
\cite{jwp} that, when started at the origin, it is realized as the Pitman transform 
$\cp (B)$ of $B$: 
\begin{align}\label{;pitb}
 \cp (B)(t)=2\max _{0\le s\le t}B_{s}-B_{t},\quad t\ge 0; 
\end{align}
more precisely, one has  
\begin{align}\label{;pit}
 \Bigl\{ 
 \Bigl( 
 R_{t},-R_{t}+2\inf _{s\ge t}R_{s}
 \Bigr) 
 \Bigr\} _{t\ge 0}  
 \eqd 
 \bigl\{ 
 \bigl( \cp (B)(t),B_{t}\bigr) 
 \bigr\} _{t\ge 0}, 
\end{align}
where $R=\{ R_{t}\} _{t\ge 0}$ is a standard three-dimensional Bessel 
process and the equality stands for the identity in law. 
See Theorem~1.3 in \cite{jwp} and the description of the theorem 
that follows its statement; we also refer to 
\cite[Chapter~VI, Theorem~\thetag{3.5}]{ry}, as well as \rref{;rpit} below. 

For every $x\ge 0$, we let $R^{x}=\{ R^{x}_{t}\} _{t\ge 0}$ denote a three-dimensional 
Bessel process started at $x$. When $x=0$, we simply write $R$ for $R^{0}$ as 
above. In this paper, with the aid of Pitman's identity~\eqref{;pit} in law, 
we construct a family of path transformations that connects any pair 
of three-dimensional Bessel processes $R^{x}$ and $R^{y}$ with distinct 
starting points $x,y>0$.

Let $C(\R _{+};\R )$ be the space of real-valued continuous functions 
over the half-line $\R _{+}=[0,\infty )$ equipped with the Borel $\sg $-field 
relative to the topology of local uniform convergence. For every $z>0$ and 
$\phi \in C(\R _{+};\R )$ such that $\phi _{t}>0$ for all $t\ge 0$, we define 
\begin{equation}\label{;trz}
\begin{split}
 &T_{z}(\phi )(t)\\
 &:=\phi _{t}-2\left\{ 
 \left( 
 z-\phi _{0}-\frac{z}{\phi _{0}}\inf _{s\ge 0}\phi _{s}+\min _{0\le s\le t}\phi _{s}
 \right) \wedge 
 \Bigl( 
 \inf _{s\ge t}\phi _{s}-\min _{0\le s\le t}\phi _{s}
 \Bigr) _{+}
 \right\} +z-\phi _{0}
\end{split}
\end{equation}
for $t\ge 0$. Here and in what follows, for every $a,b\in \R $, we set 
\begin{align*}
 a\vee b:=\max \{ a,b\} , && a\wedge b:=\min \{ a,b\} ,
\end{align*}
and $a_{+}:=a\vee 0$. Notice that, if $z=\phi _{0}$, then 
$T_{z}(\phi )(t)=\phi _{t}$ for all $t\ge 0$ since we have  
\begin{align*}
 &\Bigl( 
 -\inf _{s\ge 0}\phi _{s}+\min _{0\le s\le t}\phi _{s}
 \Bigr) 
 \wedge 
 \Bigl( 
 \inf _{s\ge t}\phi _{s}-\min _{0\le s\le t}\phi _{s}
 \Bigr) _{+}\\
 &=\Bigl\{ 
 -\Bigl( \inf _{s\ge t}\phi _{s}\wedge \min _{0\le s\le t}\phi _{s}\Bigr) 
 +\min _{0\le s\le t}\phi _{s}
 \Bigr\} 
 \wedge 
 \Bigl( 
 \inf _{s\ge t}\phi _{s}-\min _{0\le s\le t}\phi _{s}
 \Bigr) _{+}\\
 &=
 \Bigl( 
 -\inf _{s\ge t}\phi _{s}+\min _{0\le s\le t}\phi _{s}
 \Bigr) _{+}
 \wedge 
 \Bigl( 
 \inf _{s\ge t}\phi _{s}-\min _{0\le s\le t}\phi _{s}
 \Bigr) _{+},
\end{align*}
which is $0$ for any $t\ge 0$. With Pitman's transformation 
$\cp \colon C(\R _{+};\R )\to C(\R _{+};\R )$ as exhibited in \eqref{;pitb}, 
we also define, following the notation in \cite{har25+} by the author, 
\begin{equation}\label{;cq}
\begin{split}
 \cq (\phi )(t)&:=\cp (-\phi )(t)+\phi _{0}\\
 &\equiv \phi _{t}-2\min _{0\le s\le t}\phi _{s}+\phi _{0},
\end{split}
\end{equation} 
for $\phi \in C(\R _{+};\R )$ and $t\ge 0$.
In the sequel, the symbol $U$ denotes a random variable 
uniformly distributed on the interval $[0,1]$. 
One of the main results of the paper is stated as follows:

\begin{theorem}\label{;tmain}
Given $x,y>0$, the following two random variables  
valued in the product space $\R \times C(\R _{+};\R )^{3}$ have the same 
law: 
\begin{align*}
 \left( 
 1-\frac{1}{x}\inf _{s\ge 0}R^{x}_{s},R^{x},T_{y}(R^{x}),\cq (R^{x})
 \right) , 
 && 
 \left( 
 1-\frac{1}{y}\inf _{s\ge 0}R^{y}_{s},T_{x}(R^{y}),R^{y},\cq (R^{y})
 \right) . 
\end{align*}
In fact, they are both identical in law with 
\begin{align*}
 \left( 
 U,
 \Bigl\{ 
 R_{t}-2\Bigl( 
 (xU)\wedge \inf _{s\ge t}R_{s}
 \Bigr) +x
 \Bigr\} _{t\ge 0},
 \Bigl\{ 
 R_{t}-2\Bigl( 
 (yU)\wedge \inf _{s\ge t}R_{s}
 \Bigr) +y
 \Bigr\} _{t\ge 0},
 R
 \right) ,
\end{align*}
where $U$ is assumed to be independent of $R$.
In particular, for any $x>0$, it holds that 
\begin{align*}
 \cq (R^{x})\eqd R
\end{align*}
and that $\cq (R^{x})$ is independent of $\inf _{s\ge 0}R^{x}_{s}$. Moreover, 
we have 
\begin{align}\label{;infs}
 \inf _{s\ge 0}T_{y}(R^{x})(s)=\frac{y}{x}\inf _{s\ge 0}R^{x}_{s}, \quad 
 \text{a.s.,}
\end{align}
for any $x,y>0$.
\end{theorem}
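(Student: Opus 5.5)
The plan is to build everything from the third component: start with $R$ and an independent $U$, construct a process $X^{x}$ from $(R,U)$, show $X^{x}\eqd R^{x}$, and then show that $T_{y}(X^{x})$ and $\cq(X^{x})$ are deterministic functionals of $(R,U)$ that give the stated triple. Concretely, put
\begin{align*}
 X^{x}_{t}:=R_{t}-2\Bigl((xU)\wedge \inf_{s\ge t}R_{s}\Bigr)+x, \quad t\ge 0 .
\end{align*}
Both displayed quintuples are then images of $(U,R)$ under the same measurable maps, with $x$ and $y$ swapped, so the two laws in the statement coincide as soon as the identification with the third law is established. The consequences listed at the end of the statement follow from that identification: $\cq(R^{x})\eqd R$, independence of $\cq(R^{x})$ from $\inf_{s\ge0}R^{x}_{s}=x(1-U)$, and \eqref{;infs}, since $\inf_{s\ge 0}X^{y}_{s}=y(1-U)=\frac{y}{x}\inf_{s\ge0}X^{x}_{s}$.

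First I would show $X^{x}\eqd R^{x}$, by rewriting $X^{x}$ through Pitman's identity \eqref{;pit}. With $J_{t}:=\inf_{s\ge t}R_{s}$, \eqref{;pit} gives that $(R,2J-R)$ is distributed as $(\cp(B),B)$ and that $J_{t}=\max_{s\le t}B_{s}=:M_{t}$. Hence
\begin{align*}
 X^{x}\eqd \bigl\{ 2M_{t}-B_{t}-2((xU)\wedge M_{t})+x\bigr\}_{t\ge0}.
\end{align*}
Let $\tau:=\inf\{t: B_{t}=xU\}$. Before $\tau$ this process equals $x-B_{t}$, and after $\tau$ it equals $2M_{t}-B_{t}+x-2xU$. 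So it is Brownian motion started at $x$ run until it hits $x-xU$, followed by a Pitman-type reflection. The key fact I would invoke is the known description of $R^{x}$: its overall minimum $I$ is uniform on $[0,x]$, which gives $x-xU$ the right law, and conditionally on $I$ it is Brownian motion from $x$ up to the hitting time of $I$, concatenated with an independent $I+R$. By the strong Markov property of $B$ at $\tau$ and Pitman's theorem applied to $\{B_{\tau+t}-xU\}_{t\ge0}$, the post-$\tau$ piece is $x-xU$ plus an independent $\bes$ from $0$. This proves the claim, and it also shows that $\inf_{s\ge0} X^{x}_{s}=x(1-U)$.

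Next I would verify pathwise that $\cq(X^{x})=R$. Writing $X^{x}_{0}=R_{0}-2\bigl((xU)\wedge J_{0}\bigr)+x=x$ (as $R_{0}=J_{0}=0$), one has to compute $\min_{s\le t}X^{x}_{s}$. I expect it to equal $x-2((xU)\wedge J_{t})+J_{t}$, or equivalently $x-J_{t}$ before $\tau$ and $x-2xU+J_{t}$ after, in the $B$-picture. Substituting into $\cq(X^{x})=X^{x}_{t}-2\min_{s\le t}X^{x}_{s}+x$ and cancelling should give $R_{t}$; this is a case check on whether $J_{t}\le xU$.

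Finally I would show that $T_{y}(X^{x})=X^{y}$ pathwise. Using $\phi_{0}=x$, $\inf\phi=x(1-U)$ and the formula for $\min_{s\le t}\phi_{s}$ from the previous step, together with $\inf_{s\ge t}\phi_{s}$, which is again computed by cases, the bracket in \eqref{;trz} should reduce to $(xU)\wedge J_{t}-(yU)\wedge J_{t}$ plus terms that cancel. I expect this piecewise bookkeeping to be the main obstacle. The infimum $\inf_{s\ge t}X^{x}_{s}$ needs care when $J_{t}<xU$, because the future minimum $x(1-U)$ is attained after $\tau$. I would split into the three regimes $J_{t}\le xU\wedge yU$, $J_{t}$ lying between $xU$ and $yU$, and $J_{t}\ge xU\vee yU$, and verify the identity in each regime.
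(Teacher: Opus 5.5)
Your overall route is sound, but the formula your pathwise cancellations rest on is wrong. Once $J_t\ge xU$ (i.e.\ after $\tau$ in the $B$-picture), the running minimum of $X^{x}$ is frozen at its global value $x(1-U)$. That value is attained at the first time $J$ reaches $xU$. The expression $x-2xU+J_t$ you wrote is the \emph{future} infimum $\inf_{s\ge t}X^x_s$ in that regime, not the running minimum. The correct formulas are
\begin{align*}
 \min_{0\le s\le t}X^{x}_{s}=x-(xU)\wedge J_{t}, && \inf_{s\ge t}X^{x}_{s}=x-2xU+(xU)\vee J_{t}.
\end{align*}
For $J_t\le xU$, the bound $R_s-2J_s\ge -J_s\ge -J_t$ is attained at the first time $J$ reaches $J_t$, where $R=J$.

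With your expression, for $J_t>xU$ the substitution gives $\cq(X^x)(t)=R_t+2xU-2J_t\neq R_t$. In the $T_y$ step it would also make $\inf_{s\ge t}X^x_s-\min_{s\le t}X^x_s$ vanish on that regime, so neither identity comes out. With the correct formulas, $\cq(X^x)=R$ is a single cancellation with no case distinction. Using $\inf_{s\ge0}X^x_s=x(1-U)$, the term in braces in \eqref{;trz} becomes
\begin{align*}
 \bigl(yU-(xU)\wedge J_t\bigr)\wedge (J_t-xU)_{+}=(yU)\wedge J_t-(xU)\wedge J_t .
\end{align*}
Note the sign, which is opposite to yours. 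This needs only the two cases $J_t\le xU$ and $J_t\ge xU$, not three, and yields $T_y(X^x)=X^y$. The rest of your argument is fine: the transfer of the joint law from $X^x$ to $R^x$ through a measurable map, and the consequences including \eqref{;infs}.

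Your first two steps correspond to \lref{;lrxb}, \lref{;lrxx} and \pref{;prxx}, done differently. The paper gets $R^x\eqd X^x$ by conditioning Pitman's process at a fixed time on $\cp(B)(s)=x$, using only $\inf_{u\ge0}R^x_u\eqd xU$ and the Markov property. It then identifies $\cq(X^x)$ with $\cp(B)$ in the $B$-picture and transfers to $R$ via \eqref{;pit}. You instead use Williams' decomposition with the strong Markov property at $\tau$, and work directly with $J$.

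The real difference is the last step. The paper proves the deterministic identity $T_y(R^x)=\Xi^{(y)}\bigl(I^x,\cq(R^x)\bigr)$, valid for any positive path, using \lref{;ldet} on the future infimum of a Pitman transform. This explains $T_y$ as ``apply $\cq$, then rebuild with $y$'' and never uses the structure of $X^x$. You check $T_y(X^x)=X^y$ only on the constructed path. That avoids \lref{;ldet} and the cited result behind it, and makes the $x\leftrightarrow y$ symmetry and \eqref{;infs} transparent, at the cost of the explicit casework above.
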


To facilitate the understanding of the theorem, we discuss below the 
special case with $y=2x$ and compare the result with a path decomposition 
of the three-dimensional Bessel process due to D.~Williams. 

Given $x>0$, let $\phi ^{x}\in C(\R _{+};\R )$ be such that 
$\phi ^{x}_{t}>0$ for all $t\ge 0$ and $\phi ^{x}_{0}=x$. 
Then it is readily seen that 
\begin{align}\label{;tr2x}
 T_{2x}(\phi ^{x})(t)
 =\phi ^{x}_{t}-2\Bigl( 
 x\wedge \inf _{s\ge t}\phi ^{x}_{s}-\min _{0\le s\le t}\phi ^{x}_{s}
 \Bigr) _{+}+x
\end{align}
for all $t\ge 0$. Indeed, for every fixed $t\ge 0$, by the 
definition~\eqref{;trz} of transformations $T_{z}$, 
\begin{align*}
 T_{2x}(\phi ^{x})(t)
 =\phi ^{x}_{t}-2\bigl\{ 
 \bigl( 
 x-2(a\wedge b)+b
 \bigr) \wedge (a-b)_{+}
 \bigr\} +x,
\end{align*}
where we set 
\begin{align*}
 a=\inf _{s\ge t}\phi ^{x}_{s}, && 
 b=\min _{0\le s\le t}\phi ^{x}_{s},
\end{align*}
so that the right-hand side of \eqref{;tr2x} is expressed as 
\begin{align*}
 \phi ^{x}_{t}-2(x\wedge a-b)_{+}+x.
\end{align*}
Noting that $b\le x$, we divide the study into the  
two cases $a\le b$ and $b<a$. In the case $a\le b$, we have 
\begin{align*}
 \bigl( 
 x-2(a\wedge b)+b
 \bigr) \wedge (a-b)_{+}
 &=(x-2a+b)\wedge 0\\
 &=0
\end{align*}
since $x+b\ge 2b\ge 2a$, while 
\begin{align*}
 (x\wedge a-b)_{+}&=(a-b)_{+}\\
 &=0.
\end{align*}
In the case $b<a$, 
\begin{align*}
 \bigl( 
 x-2(a\wedge b)+b
 \bigr) \wedge (a-b)_{+}
 &=(x-b)\wedge (a-b)\\
 &=x\wedge a-b\\
 &=(x\wedge a-b)_{+}
\end{align*}
since $x\wedge a\ge b$. Therefore the expression~\eqref{;tr2x} of 
$T_{2x}(\phi ^{x})$ is verified. Consequently, by \tref{;tmain}, 
the process described by 
\begin{align}\label{;t2xr}
 \begin{cases}
 R^{x}_{t}+x, & 0\le t\le \sg ^{x},\\
 \displaystyle R^{x}_{t}-2\inf _{s\ge t}R^{x}_{s}+2\inf _{s\ge 0}R^{x}_{s}+x, & 
 \sg ^{x}\le t\le L^{x},\\
 \displaystyle R^{x}_{t}+2\inf _{s\ge 0}R^{x}_{s}-x, & t\ge L^{x},
 \end{cases}
\end{align}
has the same law as $R^{2x}$, where we set 
\begin{align*}
 \sg ^{x}:=\inf \Bigl\{ 
 t\ge 0;\,R^{x}_{t}=\inf _{s\ge 0}R^{x}_{s}
 \Bigr\} , && 
 L^{x}:=\sup \!\left\{ 
 t\ge 0;\,R^{x}_{t}=x
 \right\} ,
\end{align*}
namely, $\sg ^{x}$ is the first time at which $R^{x}$ attains its global infimum 
and $L^{x}$ is the last exit time of $R^{x}$ from level $x$. 
Notice that both $\sg ^{x}$ and $L^{x}$ are finite a.s.\ since 
$\lim _{t\to \infty }R^{x}_{t}=\infty $ a.s. We may compare the above observation 
with Williams' path decomposition of the three-dimensional Bessel process 
which we recall below. Prepare four independent elements 
$U,\beta ,\hB ,\rho $, where 
$\beta =\{ \beta _{t}\} _{t\ge 0}$ and $\hB =\{ \hB _{t}\} _{t\ge 0}$ are two 
one-dimensional standard Brownian motions, and 
$\rho =\{ \rho _{t}\} _{t\ge 0}$ is a standard three-dimensional Bessel process. 
For a one-dimensional continuous process $X=\{ X_{t}\} _{t\ge 0}$ and $a\in \R $, 
we write $\tau _{a}(X)$ for the first hitting time of $X$ to $a$:
\begin{align*}
 \tau _{a}(X):=\inf \{ t\ge 0;\,X_{t}=a\} ,
\end{align*}
with the convention that $\inf \emptyset =\infty $.
Then, for every $x>0$, the process given by 
\begin{align}\label{;wd}
 \begin{cases}
 x-\beta _{t}, & 0\le t\le \tau _{xU}(\beta ),\\
 x-\hB _{\tau _{xU}(\beta )+\tau _{xU}(\hB )-t}, & 
 \tau _{xU}(\beta )\le t\le \tau _{xU}(\beta )+\tau _{xU}(\hB ),\\
 x+\rho _{t-\tau _{xU}(\beta )-\tau _{xU}(\hB )}, & t\ge \tau _{xU}(\beta )+\tau _{xU}(\hB ),
 \end{cases}
\end{align}
is a three-dimensional Bessel process started at $x$; see, e.g., \cite[Section~VI.3]{ry}. 
If we apply the 
above decomposition~\eqref{;wd} with $x$ replaced by $2x$, then, as is observed 
in \eqref{;t2xr}, the fact that $T_{2x}(R^{x})$, which is distributed as $R^{2x}$, only 
differs from $R^{x}$ by $x$ up to time $\sg ^{x}$ is consistent with the decomposition. 
Moreover, we know from \eqref{;wd} that the time at which a three-dimensional 
Bessel process attains its global infimum is a.s.\ unique, and that 
\begin{align}\label{;infx}
 \sg ^{x}\eqd \tau _{xU}(\beta ),
\end{align}
which, together with the fact that 
\begin{align*}
 T_{2x}(R^{x})(L^{x})&=2\inf _{s\ge 0}R^{x}_{s}\\
 &=\inf _{s\ge 0}T_{2x}(R^{x})(s)
\end{align*}
a.s.\ by \eqref{;t2xr} and \eqref{;infs}, implies 
\begin{align}\label{;infl}
 \sg ^{2x}\eqd L^{x}.
\end{align}
The last identity in law is consistent with \eqref{;infx} in such a way that 
\begin{align*}
 \sg ^{2x}&\eqd \tau _{2xU}(\beta )\\
 &\eqd \tau _{xU}(\beta )+\tau _{xU}(\hB ),
\end{align*}
which, in view of the path decomposition~\eqref{;wd}, 
is identical in law with $L^{x}$, 
where the second line is due to the strong Markov property of Brownian 
motion and the independence of $\beta ,\hB $ and $U$.

We turn to the accompanying transform $T_{x}(R^{2x})$ of $R^{2x}$ in our theorem. 
Similar reasoning to the above reveals that 
\begin{align*}
 T_{x}(R^{2x})(t)
 =R^{2x}_{t}+\Bigl( 
 2x+\inf _{s\ge 0}R^{2x}_{s}-2\min _{0\le s\le t}R^{2x}_{s}
 \Bigr) _{+}-x
\end{align*}
for all $t\ge 0$. Therefore, by setting 
\begin{align*}
 \lambda ^{2x}
 =\min \Bigl\{ 
 t\ge 0;\,R^{2x}_{t}=x+\frac{1}{2}\inf _{s\ge 0}R^{2x}_{s}
 \Bigr\} ,
\end{align*}
we see that the process described by 
\begin{align*}
 \begin{cases}
 R^{2x}_{t}-x, & 0\le t\le \lambda ^{2x},\\
 \displaystyle R^{2x}_{t}-2\min _{0\le s\le t}R^{2x}_{s}+\inf _{s\ge 0}R^{2x}_{s}+x, & \lambda ^{2x}\le t\le \sg ^{2x},\\
 \displaystyle R^{2x}_{t}-\inf _{s\ge 0}R^{2x}_{s}+x, & t\ge \sg ^{2x},
 \end{cases}
\end{align*}
has the same law as $R^{x}$. Since 
\begin{align*}
 T_{x}(R^{2x})(\lambda ^{2x})
 &=\frac{1}{2}\inf _{s\ge 0}R^{2x}_{s}\\
 &=\inf _{s\ge 0}T_{x}(R^{2x})(s)
\end{align*}
a.s.\ by \eqref{;infs}, and since 
\begin{align*}
 R^{2x}_{t}-\inf _{s\ge 0}R^{2x}_{s}+x
 &\ge R^{2x}_{\sg ^{2x}}-\inf _{s\ge 0}R^{2x}_{s}+x\\
 &=x
\end{align*}
for all $t\ge \sg ^{2x}$, the aforementioned a.s.\ uniqueness of the time at which 
a three-dimensional Bessel process attains its global infimum entails 
\begin{align*}
 \left( 
 \lambda ^{2x},\sg ^{2x}
 \right) \eqd 
 \left( 
 \sg ^{x},L^{x}
 \right) ,
\end{align*}
extending \eqref{;infl}. 

As a summary of the above discussion, we emphasize that 
our \tref{;tmain} shows that, for every $x>0$, the joint law of the 
three processes 
\begin{align*}
 R^{x}_{t},\ R^{x}_{t}-2\Bigl( 
 x\wedge \inf _{s\ge t}R^{x}_{s}-\min _{0\le s\le t}R^{x}_{s}
 \Bigr) _{+}+x,\ \cq (R^{x})(t), \quad t\ge 0,
\end{align*}
is the same as that of 
\begin{align*}
 R^{2x}_{t}+\Bigl( 
 2x+\inf _{s\ge 0}R^{2x}_{s}-2\min _{0\le s\le t}R^{2x}_{s}
 \Bigr) _{+}-x,\ 
 R^{2x}_{t},\ \cq (R^{2x})(t), \quad t\ge 0.
\end{align*}
Finally, in a general case where $y>0$ is arbitrary, 
we observe that 
\begin{align}\label{;infy}
 \sg ^{y}\eqd 
 \begin{cases}
 \displaystyle 
 \min \Bigl\{ 
 t\ge 0;\,R^{x}_{t}=x-y+\frac{y}{x}\inf _{s\ge 0}R^{x}_{s}
 \Bigr\} , & y\le x,\\[3mm]
 \displaystyle 
 \sup \Bigl\{ 
 t\ge 0;\,R^{x}_{t}=y-x-\left( 
 \frac{y}{x}-2
 \right) \inf _{s\ge 0}R^{x}_{s}
 \Bigr\} , & y\ge x,
 \end{cases}
\end{align}
which is \eqref{;infl} when $y=2x$.
To verify the above identity~\eqref{;infy} in law, we let $\sg _{*}$ denote its 
right-hand side. Then it holds that 
\begin{align*}
 \sg _{*}\le \sg ^{x}\ \text{for $y\le x$}
 && \text{and} && 
 \sg _{*}\ge \sg ^{x}\ \text{for $y\ge x$}.
\end{align*}
Indeed, 
\begin{align*}
 \inf _{s\ge 0}R^{x}_{s}\le x-y+\frac{y}{x}\inf _{s\ge 0}R^{x}_{s}\le x&\quad \text{for $y\le x$},\\
\intertext{and}
 y-x-\left( 
 \frac{y}{x}-2
 \right) \inf _{s\ge 0}R^{x}_{s}\ge \inf _{s\ge 0}R^{x}_{s}&\quad \text{for $y\ge x$},
\end{align*}
because of the fact that 
\begin{align*}
 1-\frac{1}{x}\inf _{s\ge 0}R^{x}_{s}\ge 0.
\end{align*}
Therefore, by the definition~\eqref{;trz} of the transformations $T_{z}$, in the case $y\le x$, 
\begin{align*}
 T_{y}(R^{x})(\sg _{*})
 &=R^{x}_{\sg _{*}}-2\Bigl\{ 
 \Bigl( 
 y-x-\frac{y}{x}\inf _{s\ge 0}R^{x}_{s}+R^{x}_{\sg _{*}}
 \Bigr) \wedge 0
 \Bigr\} +y-x\\
 &=x-y+\frac{y}{x}\inf _{s\ge 0}R^{x}_{s}+y-x\\
 &=\frac{y}{x}\inf _{s\ge 0}R^{x}_{s},
\end{align*}
while in the case $y\ge x$, we also see that 
\begin{align*}
 T_{y}(R^{x})(\sg _{*})
 &=R^{x}_{\sg _{*}}-2\Bigl\{ 
 \Bigl( 
 y-x-\frac{y}{x}\inf _{s\ge 0}R^{x}_{s}+\inf _{s\ge 0}R^{x}_{s}
 \Bigr) \wedge 
 \Bigl( 
 R^{x}_{\sg _{*}}-\inf _{s\ge 0}R^{x}_{s}
 \Bigr) 
 \Bigr\} +y-x\\
 &=y-x-\left( 
 \frac{y}{x}-2
 \right) \inf _{s\ge 0}R^{x}_{s}
 -2\Bigl( 
 y-x-\frac{y}{x}\inf _{s\ge 0}R^{x}_{s}+\inf _{s\ge 0}R^{x}_{s}
 \Bigr) +y-x\\
 &=\frac{y}{x}\inf _{s\ge 0}R^{x}_{s},
\end{align*}
two of which verify \eqref{;infy} in view of \eqref{;infs}.

We give an outline of the paper. In \sref{;sptmain}, we prove 
\tref{;tmain}. A key to the proof is \pref{;prxx}, which is 
also applied in \sref{;scl} to obtain a disintegration 
formula for the law of $R^{x}$ for $x>0$ 
in terms of Pitman's transformation 
along with an application to three-dimensional 
Bessel bridges.

\section{Proof of \tref{;tmain}}\label{;sptmain}

Pick $s>0$ arbitrarily and let $x>0$. The proof of \tref{;tmain} hinges upon a 
simple observation on the conditional law 
of the process $\{ \cp (B)(t+s)\} _{t\ge 0}$ given $\cp (B)(s)=x$ 
which we will describe below. To that end, 
by setting 
\begin{align*}
 \tB _{t}:=B_{t+s}-B_{s},\quad t\ge 0,
\end{align*}
we rewrite, for each $t\ge 0$, 
\begin{align*}
 \cp (B)(t+s)&=2\Bigl( 
 \max _{0\le u\le s}B_{u}\vee \max _{s\le u\le t+s}B_{u}
 \Bigr) -B_{t+s}\\
 &=2\Bigl\{ 
 \max _{0\le u\le s}B_{u}\vee \Bigl( 
 \max _{0\le u\le t}\tB _{u}+B_{s}
 \Bigr) 
 \Bigr\} -\tB _{t}-B_{s}\\
 &=\Bigl( 
 \cp (B)(s)-\tB _{t}
 \Bigr) \vee 
 \Bigl( 
 \cp (\tB )(t)+B_{s}
 \Bigr) .
\end{align*}
Notice that, conditionally on $\cp (B)(s)=x$, $B_{s}$ is uniformly distributed 
on $[-x,x]$, that is, identical in law with $x(2U-1)$. 
Indeed, thanks to Pitman's identity~\eqref{;pit}, we have in particular 
\begin{align*}
 \bigl( 
 \cp (B)(s),B_{s}
 \bigr) 
 \eqd 
 \Bigl( 
 R_{s},-R_{s}+2\inf _{u\ge s}R_{u}
 \Bigr) ;
\end{align*}
then the above claim immediately follows by the well-known fact that, 
as may also be deduced from \eqref{;wd}, 
\begin{align}\label{;unif}
 \inf _{u\ge 0}R^{x}_{u}\eqd xU, 
\end{align}
and by the Markov property of the three-dimensional Bessel process.
Since $2U-1\eqd 1-2U$ and the process $\tB =\{ \tB _{t}\} _{t\ge 0}$ 
is a Brownian motion independent of $(\cp (B)(s),B_{s})$, we arrive at 

\begin{lemma}\label{;lrxb}
For every $x>0$, it holds that 
\begin{align}\label{;rxb}
 R^{x}
 \eqd 
 \bigl\{ 
 (x-B_{t})\vee \bigl( \cp (B)(t)+x(1-2U)\bigr) 
 \bigr\} _{t\ge 0},
\end{align}
where $U$ is assumed to be independent of $B$.
\end{lemma}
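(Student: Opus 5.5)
The plan is to make rigorous the heuristic computation given just before the statement. Fix $s>0$ and $x>0$. By the Markov property of the Pitman process $\cp(B)$, which is a three-dimensional Bessel process by \eqref{;pit}, the conditional law of $\{\cp(B)(t+s)\}_{t\ge 0}$ given $\cp(B)(s)=x$ is the law of $R^{x}$. So we will compute this conditional law in a second way, through the decomposition
\begin{align*}
 \cp(B)(t+s)=\bigl(\cp(B)(s)-\tB_{t}\bigr)\vee\bigl(\cp(\tB)(t)+B_{s}\bigr),\quad t\ge 0,
\end{align*}
which holds pathwise and was verified above.

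First, we determine the conditional law of $B_{s}$ given $\cp(B)(s)$. By \eqref{;pit}, the pair $(\cp(B)(s),B_{s})$ has the law of $(R_{s},-R_{s}+2\inf_{u\ge s}R_{u})$. By the Markov property of $R$ at time $s$, conditionally on $R_{s}=x$ the variable $\inf_{u\ge s}R_{u}$ is distributed as $\inf_{u\ge 0}R^{x}_{u}$, which is $xU$ by \eqref{;unif}. Hence, given $\cp(B)(s)=x$, the variable $B_{s}$ has the law of $x(2U-1)$. Equivalently, it has the law of $x(1-2U)$, since $2U-1\eqd 1-2U$.

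Second, $\tB$ is a Brownian motion independent of $\mathcal{F}_{s}$, and in particular independent of $(\cp(B)(s),B_{s})$. Therefore, conditionally on $\cp(B)(s)=x$, the process on the right-hand side of the decomposition has the law of
\begin{align*}
 \bigl\{(x-B_{t})\vee\bigl(\cp(B)(t)+x(1-2U)\bigr)\bigr\}_{t\ge 0},
\end{align*}
where $U$ is independent of $B$. This is obtained by substituting $\cp(B)(s)=x$ and replacing $B_{s}$ by a variable with its conditional law, independent of $\tB$. We then rename $\tB$ as $B$. Comparing with the first paragraph gives \eqref{;rxb}, at least for almost every $x$ with respect to the law of $R_{s}$, which has a positive density on $(0,\infty)$.

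The main obstacle is upgrading "almost every $x$" to "every $x>0$", since conditioning on a null event only determines laws up to null sets. We handle this by continuity in $x$. The left side of \eqref{;rxb}, the law of $R^{x}$, is weakly continuous in $x$ by the Feller property of the Bessel process. The right side is a continuous functional of $(x,B,U)$ in the topology of local uniform convergence, so its law is also weakly continuous in $x$. Two weakly continuous families of laws that agree for almost every $x>0$ agree for every $x>0$.

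Alternatively, we can avoid conditioning altogether. Test both sides against $f(\cp(B)(s))\,F(\cdot)$ for bounded continuous $f$ and $F$, and use Fubini's theorem together with the independence of $\tB$ from $\mathcal{F}_{s}$. This yields $\int f(x)\,\ex[F(R^{x})]\,\pr(R_{s}\in dx)=\int f(x)\,\ex[F(\text{RHS}_{x})]\,\pr(R_{s}\in dx)$. Both integrands are continuous in $x$, so they coincide for every $x>0$.
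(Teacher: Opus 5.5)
Your proposal is correct and follows the paper's argument. The shared steps are the pathwise decomposition of $\cp(B)(t+s)$, the uniform conditional law of $B_{s}$ given $\cp(B)(s)=x$ (from \eqref{;pit}, \eqref{;unif} and the Markov property), the independence of $\tB$ from $(\cp(B)(s),B_{s})$, and the identification of the conditional law of $\{\cp(B)(t+s)\}_{t\ge 0}$ with the law of $R^{x}$. Your continuity-in-$x$ argument, or equivalently the test-function version, for passing from almost every $x$ to every $x>0$ is a point the paper leaves implicit, and you handle it correctly.
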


\begin{proof}
By \eqref{;pit} and the Markov property of the three-dimensional Bessel process, 
the conditional law of the process $\{ \cp (B)(t+s)\} _{t\ge 0}$ given $\cp (B)(s)=x$ 
coincides with the law of $R^{x}$.
\end{proof}

For every fixed $x>0$, we write $X^{x}=\{ X^{x}_{t}\} _{t\ge 0}$ for the 
process in the right-hand side of \eqref{;rxb}. Then it is readily seen that 
\begin{align}\label{;xx}
 X^{x}_{t}
 =
 \begin{cases}
 x-B_{t}, & 0\le t\le \tau _{xU}(B),\\
 \cp (B)(t)+x(1-2U), & t\ge \tau _{xU}(B), 
 \end{cases}
\end{align}
since, for every $t\ge 0$,
\begin{align*}
 \cp (B)(t)+x(1-2U)+B_{t}=2\max _{0\le s\le t}B_{s}+x(1-2U),
\end{align*}
which exceeds $x$ at $t=\tau _{xU}(B)$. By the fact that 
$\cp (B)(t)\ge \max _{0\le s\le t}B_{s}$ for all $t\ge 0$, 
we have 
\begin{align*}
 \inf _{s\ge 0}X^{x}_{s}&=x(1-U)\\
 &=X^{x}_{\tau _{xU}(B)},
\end{align*}
and hence inspection of the path decomposition~\eqref{;wd} of $R^{x}$ tells us that 
\begin{align}\label{;joint}
 \Bigl( \sg ^{x},\inf _{s\ge 0}R^{x}_{s},\{ R^{x}_{t}\} _{t\ge 0}\Bigr) 
 \eqd 
 \bigl( 
 \tau _{xU}(B),x(1-U),\{ X^{x}_{t}\} _{t\ge 0}
 \bigr) .
\end{align}

\begin{lemma}\label{;lrxx}
For every $x>0$, it holds that 
\begin{align}\label{;rxx}
 \bigl\{ 
 \bigl( 
 R^{x}_{t},\cq (R^{x})(t)
 \bigr) 
 \bigr\} _{t\ge 0}
 \eqd 
 \bigl\{ 
 \bigl( 
 X^{x}_{t},\cp (B)(t)
 \bigr) 
 \bigr\} _{t\ge 0}.
\end{align}
\end{lemma}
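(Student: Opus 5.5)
The identity will follow from \lref{;lrxb}: the map $\phi \mapsto (\phi ,\cq (\phi ))$ is a measurable map of $C(\R _{+};\R )$ into $C(\R _{+};\R )^{2}$, so it is enough to compute this map along the paths of $X^{x}$ and show that
\begin{align*}
 \cq (X^{x})(t)=\cp (B)(t)\quad \text{for all $t\ge 0$, a.s.}
\end{align*}
Then $(R^{x},\cq (R^{x}))\eqd (X^{x},\cq (X^{x}))=(X^{x},\cp (B))$, which is \eqref{;rxx}. The proof therefore reduces to a pathwise computation based on the explicit description \eqref{;xx} of $X^{x}$.

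Recall that $\cq (\phi )(t)=\phi _{t}-2\min _{0\le s\le t}\phi _{s}+\phi _{0}$ and that $X^{x}_{0}=x$. We split into two time ranges.

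For $0\le t\le \tau _{xU}(B)$ we have $X^{x}_{t}=x-B_{t}$. This gives
\begin{align*}
 \min _{0\le s\le t}X^{x}_{s}=x-\max _{0\le s\le t}B_{s},
\end{align*}
and hence
\begin{align*}
 \cq (X^{x})(t)=x-B_{t}-2x+2\max _{0\le s\le t}B_{s}+x=\cp (B)(t).
\end{align*}

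For $t\ge \tau _{xU}(B)$, the main step is to show that
\begin{align*}
 \min _{0\le s\le t}X^{x}_{s}=x(1-U),
\end{align*}
that is, after $\tau _{xU}(B)$ the running minimum stays frozen at the value attained at $\tau _{xU}(B)$. On $[0,\tau _{xU}(B)]$ the minimum of $x-B$ is $x-xU=x(1-U)$. On $[\tau _{xU}(B),t]$ we have
\begin{align*}
 X^{x}_{s}=\cp (B)(s)+x(1-2U)\ge \max _{0\le u\le s}B_{u}+x(1-2U)\ge xU+x(1-2U)=x(1-U),
\end{align*}
using $\cp (B)\ge \max B$ and $\max _{0\le u\le s}B_{u}\ge xU$ for $s\ge \tau _{xU}(B)$. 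This matches the already noted identity $\inf _{s\ge 0}X^{x}_{s}=x(1-U)$. Substituting,
\begin{align*}
 \cq (X^{x})(t)=\cp (B)(t)+x(1-2U)-2x(1-U)+x=\cp (B)(t).
\end{align*}
This holds for every $t\ge 0$ on the a.s.\ event where $\tau _{xU}(B)<\infty $, which completes the argument.

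The only potential subtlety is measurability and the passage from the one-dimensional law identity of \lref{;lrxb} to the joint law. This is immediate once the identity is written as an equality in law of $C(\R _{+};\R )$-valued random variables and composed with the continuous map $\phi \mapsto (\phi ,\cq (\phi ))$.
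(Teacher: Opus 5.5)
Your proof is correct and is essentially the paper's argument. Both rest on the decomposition \eqref{;xx} and on the fact that the running minimum of $X^{x}$ equals $x-\max_{0\le s\le t}B_{s}$ up to $\tau_{xU}(B)$ and stays at $x(1-U)$ afterwards; the difference is only bookkeeping: you check $\cq (X^{x})=\cp (B)$ pathwise on the $X^{x}$ side and transfer with \lref{;lrxb} alone, whereas the paper writes $\cp (B)$ in terms of $X^{x}$, $\tau_{xU}(B)$ and $x(1-U)$, transfers through the joint identity \eqref{;joint}, and only then recognizes $\cq (R^{x})$, so your version is slightly more economical.
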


\begin{proof}
By the description~\eqref{;xx} of the process $X^{x}$, we have 
\begin{align*}
 \cp (B)(t)
 =
 \begin{cases}
 \cp (x-X^{x})(t), & 0\le t\le \tau _{xU}(B),\\
 X^{x}_{t}-2x(1-U)+x, & t\ge \tau _{xU}(B).
 \end{cases}
\end{align*}
Therefore, by \eqref{;joint}, we see that the two-dimensional process 
consisting of 
\begin{align*}
 R^{x} && \text{and} && 
 \begin{cases}
 \cp (x-R^{x})(t), & 0\le t\le \sg ^{x},\\
 \displaystyle R^{x}_{t}-2\inf _{s\ge 0}R^{x}_{s}+x, & t\ge \sg ^{x},
 \end{cases}
\end{align*}
has the same law as the two-dimensional process 
\begin{align*}
 \bigl( 
 X^{x}_{t},\cp (B)(t)
 \bigr) ,\quad t\ge 0.
\end{align*}
Since $\sg ^{x}$ is the time that $R^{x}$ attains $\inf _{s\ge 0}R^{x}_{s}$, 
the conclusion follows by the definition~\eqref{;cq} of the transformation $\cq $.
\end{proof}

Combining the above lemma with Pitman's identity~\eqref{;pit} in law, 
we obtain a key proposition towards the proof of \tref{;tmain} as 
follows:

\begin{proposition}\label{;prxx}
For every $x>0$, the following two two-dimensional processes 
\begin{align*}
 \bigl( 
 R^{x}_{t},\cq (R^{x})(t)
 \bigr) ,\quad t\ge 0,
\end{align*}
and 
\begin{align*}
 \Bigl( 
 R_{t}-2\Bigl( 
 (xU)\wedge \inf _{s\ge t}R_{s}
 \Bigr) +x,R_{t}
 \Bigr) ,\quad t\ge 0,
\end{align*}
are identical in law. Here, $U$ is independent of $R$.
\end{proposition}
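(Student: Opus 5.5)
By \lref{;lrxx}, the pair $(R^{x},\cq (R^{x}))$ has the same law as $(X^{x},\cp (B))$. It therefore suffices to rewrite $X^{x}$ as a functional of $\cp (B)$ and the auxiliary variables. We then transfer that functional to $R$ through Pitman's identity~\eqref{;pit}.

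\textbf{Rewriting $X^{x}$.} By its definition, $X^{x}_{t}=(x-B_{t})\vee (\cp (B)(t)+x(1-2U))$. Replacing $U$ by $1-U$ does not change the law of $(U,B)$, so we may equally use
\begin{align*}
 \widetilde X^{x}_{t}:=(x-B_{t})\vee \bigl( \cp (B)(t)-x+2xU\bigr).
\end{align*}
Put $M_{t}:=\max _{0\le s\le t}B_{s}$, so that $B_{t}=2M_{t}-\cp (B)(t)$. Then $x-B_{t}=\cp (B)(t)-2M_{t}+x$, and therefore
\begin{align*}
 \widetilde X^{x}_{t}
 =\cp (B)(t)+x-2\bigl( M_{t}\wedge (x-xU)\bigr) .
\end{align*}
Hence $\widetilde X^{x}_{t}=\cp (B)(t)-2\bigl( (xU')\wedge M_{t}\bigr) +x$ with $U':=1-U$, which is again uniform and independent of $B$. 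This is a purely pathwise identity, so we obtain
\begin{align*}
 \bigl\{ \bigl( X^{x}_{t},\cp (B)(t)\bigr) \bigr\} _{t\ge 0}\eqd
 \Bigl\{ \Bigl( \cp (B)(t)-2\bigl( (xU)\wedge M_{t}\bigr) +x,\ \cp (B)(t)\Bigr) \Bigr\} _{t\ge 0}.
\end{align*}

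\textbf{Transfer via Pitman.} Identity~\eqref{;pit} gives the joint law $\{ (\cp (B)(t),B_{t})\} \eqd \{ (R_{t},2\inf _{s\ge t}R_{s}-R_{t})\} $ as processes. Consequently
\begin{align*}
 M_{t}=\tfrac12\bigl( \cp (B)(t)+B_{t}\bigr) \quad\text{corresponds to}\quad \inf _{s\ge t}R_{s}.
\end{align*}
Since $U$ is independent of $B$, and of $R$ on the other side, this joint identity in law extends to the triple $(U,\cp (B),M)\eqd (U,R,\{ \inf _{s\ge t}R_{s}\} _{t\ge0})$. Applying the measurable map $(u,\phi ,m)\mapsto \bigl( \phi -2((xu)\wedge m)+x,\phi \bigr) $ to both sides yields the claimed identity in law with the pair $\bigl( R_{t}-2((xU)\wedge \inf _{s\ge t}R_{s})+x,\ R_{t}\bigr) $.

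\textbf{Where the care is needed.} The step requiring attention is the algebra converting the maximum in $X^{x}$ into the minimum $(xU)\wedge M_{t}$, together with the substitution $U\mapsto 1-U$. One must check the orientation: it is $M_{t}$, not $\min B$, that corresponds to $\inf _{s\ge t}R_{s}$. One must also confirm that the relabelling of $U$ is harmless, which holds because it is applied simultaneously to the whole process while the second component $\cp (B)$ does not involve $U$.
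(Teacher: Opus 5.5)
Your proof is correct and follows the paper's route: \lref{;lrxx}, then Pitman's joint identity~\eqref{;pit} together with the independence of $U$. The only difference is ordering: you convert the maximum into $(xU)\wedge M_{t}$ on the Brownian side before transferring, whereas the paper transfers $X^{x}$ to $R$ first and then simplifies $\bigl(R_{t}+x-2\inf_{s\ge t}R_{s}\bigr)\vee(R_{t}+x-2xU)$. Also, the relabelling $U\mapsto 1-U$ is superfluous, since your two substitutions cancel: because $x(1-2U)=x-2xU$, one has directly $X^{x}_{t}=\cp(B)(t)+x-2\bigl((xU)\wedge M_{t}\bigr)$ pathwise.
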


\begin{proof}
Recall that $X^{x}$ denotes the right-hand side of \eqref{;rxb}. 
Then, by \eqref{;pit} and the independence of $U$ and $R$, 
the right-hand side of \eqref{;rxx} is identical in law with the 
two-dimensional process 
\begin{align*}
 \Bigl( 
 \Bigl( 
 R_{t}+x-2\inf _{s\ge t}R_{s}
 \Bigr) \vee (R_{t}+x-2xU),
 R_{t}
 \Bigr) ,\quad t\ge 0,
\end{align*}
from which the claim follows readily.
\end{proof}

By the relation that $2(a\wedge b)=a+b-|a-b|$ for $a,b\in \R $, 
notice that   
\begin{align*}
 &R_{t}-2\Bigl( 
 (xU)\wedge \inf _{s\ge t}R_{s}
 \Bigr) +x\\
 &=R_{t}-xU -\inf _{s\ge t}R_{s}+\Bigl| 
 xU-\inf _{s\ge t}R_{s}
 \Bigr| +x\\
 &\ge x(1-U)
\end{align*}
for all $t\ge 0$ and the equality holds at the last exit time of $R$ 
from $xU$.

In order to prove \tref{;tmain}, we also use the following 
deterministic lemma: 

\begin{lemma}\label{;ldet}
For every $\phi \in C(\R _{+};\R )$, one has 
\begin{align*}
 \inf _{s\ge t}\cp (\phi )(s)
 =\max _{0\le s\le t}\phi _{s}+\Bigl( 
 \max _{0\le s\le t}\phi _{s}-\sup _{s\ge t}\phi _{s}
 \Bigr) _{+}
\end{align*}
for all $t\ge 0$.
\end{lemma}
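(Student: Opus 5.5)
We prove the identity of \lref{;ldet} deterministically, for fixed $t\ge 0$. Write
\begin{align*}
 M_{t}:=\max _{0\le s\le t}\phi _{s}, && S_{t}:=\sup _{s\ge t}\phi _{s},
\end{align*}
where $S_{t}$ may be $+\infty$, in which case the positive part vanishes. For $s\ge t$ set $M^{t}_{s}:=\max _{t\le u\le s}\phi _{u}$, so that
\begin{align*}
 \cp (\phi )(s)=2\bigl( M_{t}\vee M^{t}_{s}\bigr) -\phi _{s}.
\end{align*}
The plan is to show that the infimum over $s\ge t$ is at least the right-hand side, and then to exhibit times along which that value is attained or approached. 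We split into two cases according to whether $S_{t}\ge M_{t}$ or $S_{t}<M_{t}$.

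\emph{Lower bound.} For every $s\ge t$ we have $M_{t}\vee M^{t}_{s}\ge \phi _{s}$, hence
\begin{align*}
 \cp (\phi )(s)\ge M_{t}\vee M^{t}_{s}\ge M_{t}.
\end{align*}
Also $\cp (\phi )(s)\ge 2M_{t}-\phi _{s}\ge 2M_{t}-S_{t}$. Combining the two bounds gives
\begin{align*}
 \inf _{s\ge t}\cp (\phi )(s)\ge M_{t}\vee (2M_{t}-S_{t})=M_{t}+(M_{t}-S_{t})_{+},
\end{align*}
which is the right-hand side of the lemma.

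\emph{Attainment when $S_{t}\ge M_{t}$.} Here the target value is $M_{t}$. If $\phi _{t}=M_{t}$, take $s=t$: then $\cp (\phi )(t)=M_{t}$. Otherwise $\phi _{t}<M_{t}$. If $S_{t}>M_{t}$, continuity gives a first time $s\ge t$ with $\phi _{s}=M_{t}$; at that time $M^{t}_{s}=M_{t}$ and $\cp (\phi )(s)=M_{t}$. If $S_{t}=M_{t}$ but the value $M_{t}$ is never reached on $[t,\infty )$, choose $s_{n}\ge t$ with $\phi _{s_{n}}\to M_{t}$. Since $M^{t}_{s_{n}}\le S_{t}=M_{t}$, we get $\cp (\phi )(s_{n})=2M_{t}-\phi _{s_{n}}\to M_{t}$. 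In every subcase the infimum equals $M_{t}$.

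\emph{Attainment when $S_{t}<M_{t}$.} Then $M^{t}_{s}\le S_{t}<M_{t}$ for all $s\ge t$, so $\cp (\phi )(s)=2M_{t}-\phi _{s}$ on $[t,\infty )$. Therefore
\begin{align*}
 \inf _{s\ge t}\cp (\phi )(s)=2M_{t}-S_{t}=M_{t}+(M_{t}-S_{t})_{+},
\end{align*}
which again matches the right-hand side.

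The only delicate point is this attainment step in the boundary case $S_{t}=M_{t}$ with the supremum not attained; it is handled by the approximating sequence $s_{n}$ above. Everything else is direct comparison of maxima.
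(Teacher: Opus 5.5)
Your proof is correct, but it follows a different route from the paper. The paper does not argue on the infinite horizon directly. It quotes the finite-horizon identity $\min_{t\le s\le T}\cp(\phi)(s)=\max_{0\le s\le t}\phi_{s}+\bigl(\max_{0\le s\le t}\phi_{s}-\max_{t\le s\le T}\phi_{s}\bigr)_{+}$ from \cite[Lemma~2.2]{har25+} and lets $T\to\infty$. This works because both sides are monotone in $T$: the left-hand side decreases to $\inf_{s\ge t}\cp(\phi)(s)$, and $\max_{t\le s\le T}\phi_{s}$ increases to $\sup_{s\ge t}\phi_{s}$, possibly $+\infty$. You instead prove the infinite-horizon statement from scratch, using the lower bound $\cp(\phi)(s)\ge M_{t}\vee(2M_{t}-\phi_{s})$ and an explicit attainment argument. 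The paper's route is a one-line reduction but depends on the external lemma. Yours is self-contained, and if you run it on $[t,T]$ instead of $[t,\infty)$ it also proves the cited finite-horizon lemma. On $[t,T]$ the maximum of $\phi$ is attained, so your approximating-sequence subcase never arises; that non-attainment issue is exactly what the paper's limiting argument absorbs for free. One small bookkeeping remark: your subcases for $S_{t}\ge M_{t}$ omit the situation $\phi_{t}<M_{t}$, $S_{t}=M_{t}$ with $M_{t}$ attained on $[t,\infty)$. This is harmless, since your sequence argument never uses non-attainment (pick $s_{n}$ with $\phi_{s_{n}}\to S_{t}$ by definition of the supremum). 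You can simply drop the qualifier ``but the value $M_{t}$ is never reached.''
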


\begin{proof}
Fix $\phi \in C(\R _{+};\R )$ arbitrarily. We know from \cite[Lemma~2.2]{har25+} 
that, for every $T>0$, 
\begin{align*}
 \min _{t\le s\le T}\cp (\phi )(s)
 =\max _{0\le s\le t}\phi _{s}+\Bigl( 
 \max _{0\le s\le t}\phi _{s}-\max _{t\le s\le T}\phi _{s}
 \Bigr) _{+}
\end{align*}
for all $0\le t\le T$. Letting $T\to \infty $ on both sides proves the claim.
\end{proof}

\begin{remark}\label{;rpit}
The above lemma shows that, if $\phi \in C(\R _{+};\R )$ is such that 
$\lim _{t\to \infty }\phi _{t}=\infty $, then 
\begin{align*}
 -\cp (\phi )(t)+2\inf _{s\ge t}\cp (\phi )(s)=\phi _{t}
\end{align*}
for all $t\ge 0$. Therefore, once we know the identity in law between 
$R$ and $\cp (B)$, then the joint identity~\eqref{;pit} follows by the 
fact that $\lim _{t\to \infty }R_{t}=\infty $ a.s.
\end{remark}

We are in a position to prove \tref{;tmain}. 

\begin{proof}[Proof of \tref{;tmain}]
Fix $x,y>0$ arbitrarily below. For every $z,z'\ge 0$ and for every 
$\phi \in C(\R _{+};\R )$ such that $\phi _{t}\ge 0$ for all $t\ge 0$, 
we set 
\begin{align*}
 \Xi ^{(z)}_{t}(z',\phi )
 :=\phi _{t}-2\Bigl( 
 (zz')\wedge \inf _{s\ge t}\phi _{s}
 \Bigr) +z,\quad t\ge 0,
\end{align*}
so that, for every $z>0$, 
\begin{align}\label{;prod}
 \bigl( 
 I^{z},R^{z},\cq (R^{z})
 \bigr) \eqd 
 \bigl( 
 U,\Xi ^{(z)}(U,R),R
 \bigr) 
\end{align}
by \pref{;prxx}, where we put 
$I^{z}=1-(1/z)\inf _{s\ge 0}R^{z}_{s}$. We regard each side of \eqref{;prod} as 
a random variable valued in the product space $\R \times C(\R _{+};\R )^{2}$;  
a similar remark applies below. Then we have 
\begin{align*}
 &\bigl( 
 U,\Xi ^{(x)}(U,R),\Xi ^{(y)}(U,R),R
 \bigr) \\
 &\eqd \bigl( 
 I^{x},R^{x},\Xi ^{(y)}(I^{x},\cq (R^{x})),\cq (R^{x})
 \bigr) \\
 &\eqd \bigl( 
 I^{y},\Xi ^{(x)}(I^{y},\cq (R^{y})),R^{y},\cq (R^{y})
 \bigr) ,
\end{align*}
where we have used \eqref{;prod} for the second and third lines 
with $z=x$ and $z=y$, respectively. As $x$ and $y$ are arbitrarily, 
it now remains to prove that 
\begin{align}\label{;transs}
 \Xi ^{(y)}_{t}(I^{x},\cq (R^{x}))
 =T_{y}(R^{x})(t)
\end{align}
for every $t\ge 0$. By the definition~\eqref{;cq} 
of the transformation $\cq $, the left-hand side of \eqref{;transs} is 
equal to 
\begin{align*}
 &R^{x}_{t}-2\min _{0\le s\le t}R^{x}_{s}+x
 -2\Bigl( 
 (yI^{x})\wedge \inf _{s\ge t}\bigl( 
 \cp (-R^{x})(s)+x
 \bigr) 
 \Bigr) +y\\
 &=R^{x}_{t}-2\Bigl\{  
 \Bigl( 
 yI^{x}-x+\min _{0\le s\le t}R^{x}_{s}
 \Bigr) \wedge \Bigl( 
 \inf _{s\ge t}\cp (-R^{x})(s)+\min _{0\le s\le t}R^{x}_{s}
 \Bigr) 
 \Bigr\} +y-x,
\end{align*}
which agrees with $T_{y}(R^{x})(t)$ 
thanks to \lref{;ldet} and hence completes the proof of the theorem.
\end{proof}

\section{A few consequences of \pref{;prxx}}\label{;scl}

Given $x>0$, \pref{;prxx} also enables us to investigate the law 
of $R^{x}$ conditioned on $\cp (-R^{x})$, or equivalently on $\cq (R^{x})$, 
which we will discuss in this section.

Unless otherwise specified, we fix $x>0$ throughout. 
The independence of $\cp (-R^{x})$ and $\inf _{s\ge 0}R^{x}_{s}$ as 
observed in \tref{;tmain} reflects the fact that, 
in general, Pitman's transformation reduces the information of the original 
path. A simple example is provided by the pair $\phi ^{1}_{t}=t$ and 
$\phi ^{2}_{t}=-t$ for $t\ge 0$, for which we have 
$\cp (\phi ^{1})(t)=\cp (\phi ^{2})(t)=t$ for all $t\ge 0$. This lack of 
information of $\cp $ leads to an interesting consequence when 
applied to a specific stochastic process. For instance, we have observed at 
the beginning of \sref{;sptmain} that, for every $t>0$, conditionally 
on $\cp (B)(t)=x$, $B_{t}$ is uniformly distributed on $[-x,x]$, which, 
thanks to Pitman's identity~\eqref{;pit} and the Markov property of 
the three-dimensional Bessel process, extends to the fact that, conditionally on 
$\scp _{s},\,s\le t$, with $\cp (B)(t)=x$, the law of $B_{t}$ is 
still the uniform distribution over $[-x,x]$. Here, $\{ \scp _{t}\} _{t\ge 0}$ 
is the natural filtration of the process $\{ \cp (B)(t)\} _{t\ge 0}$: 
\begin{align*}
 \scp _{t}:=\sg \!\left( 
 \cp (B)(s),s\le t
 \right) .
\end{align*}
In fact, as discussed in \cite{har25+}, we can say more as will be seen below. 

We fix $t>0$ in the sequel. We let $C([0,t];\R )$ 
be the space of real-valued continuous functions over the interval $[0,t]$. 
We equip $C([0,t];\R )$ with the topology of uniform convergence. 
For every $z\in \R $, we define 
$\cm _{z}\colon C([0,t];\R )\to C([0,t];\R )$ by 
\begin{equation*}
\begin{split}
 &\cm _{z}(\phi )(s)\\
 &:=\phi _{s}-\frac{\phi _{t}-z}{2}-\left| 
 \frac{\phi _{t}-z}{2}+\max _{0\le u\le s}\phi _{u}-\max _{s\le u\le t}\phi _{u}
 \right| 
 +\Bigl| 
 \max _{0\le u\le s}\phi _{u}-\max _{s\le u\le t}\phi _{u}
 \Bigr| 
\end{split}
\end{equation*}
for $0\le s\le t$ and $\phi \in C([0,t];\R )$, which, by \cite[Proposition~2.1]{har25+}, 
admits the following expression in terms of $\cp $ for 
every $\phi \in C([0,t];\R )$ and $0\le s\le t$:
\begin{align}\label{;itpit}
 \cm _{z}(\phi )(s)
 =-\cp (\phi )(s)+\min \Bigl\{ 
 2\min _{s\le u\le t}\cp (\phi )(u),\cp (\phi )(t)+z
 \Bigr\} ,
\end{align}
where we regard $\cp $ as a transformation on $\ctd{t}{}$ 
keeping the same notation. Note that, by definition, 
\begin{align}\label{;itd}
 \cm _{\phi _{t}}(\phi )=\phi 
\end{align}
for all $\phi \in \ctd{t}{}$. 
In the next statement, as well as in what follows, 
a real-valued function defined on a topological space is 
said to be measurable if it is Borel measurable.

\begin{proposition}[{\cite[Proposition~2.3]{har25+}}]\label{;ppit}
For every nonnegative measurable function $F$ on $\ctd{t}{}$, 
it holds that 
\begin{align*}
 \ex [F(B)]=\int _{\R }dz\,\ex \!\left[ 
 \frac{1}{2\cp (B)(t)}F\bigl( 
 \cm _{z}(B)
 \bigr) ;\,\cp (B)(t)\ge |z|
 \right] .
\end{align*}
\end{proposition}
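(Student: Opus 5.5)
The plan is to show that, on $[0,t]$, the Brownian path $B$ is a deterministic function of the pair $(\cp (B)|_{[0,t]},B_{t})$. Conditionally on the Pitman path, $B_{t}$ will turn out to be uniform on $[-\cp (B)(t),\cp (B)(t)]$, and the uniform density $\frac{1}{2\cp (B)(t)}\ind _{\{|z|\le \cp (B)(t)\}}$ is exactly the weight in the claimed formula.

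\emph{Step 1 (reconstruction).} For $\rho \in \ctd{t}{}$ and $z\in \R$ set
\begin{align*}
 \Phi (\rho ,z)(s):=-\rho _{s}+\min \Bigl\{ 2\min _{s\le u\le t}\rho _{u},\rho _{t}+z\Bigr\} ,\quad 0\le s\le t .
\end{align*}
The map $(\rho ,z)\mapsto \Phi (\rho ,z)$ is continuous from $\ctd{t}{}\times \R$ to $\ctd{t}{}$, so $F\circ \Phi$ is measurable. By \eqref{;itpit},
\begin{align*}
 \cm _{z}(B)=\Phi \bigl( \cp (B)|_{[0,t]},z\bigr)
\end{align*}
for every $z$. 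Combining this with \eqref{;itd} gives
\begin{align*}
 B=\cm _{B_{t}}(B)=\Phi \bigl( \cp (B)|_{[0,t]},B_{t}\bigr) .
\end{align*}
Consequently
\begin{align*}
 \ex [F(B)]=\ex \bigl[ F\bigl( \Phi (\cp (B)|_{[0,t]},B_{t})\bigr) \bigr] .
\end{align*}

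\emph{Step 2 (conditional law of $B_{t}$).} Next I will show that, conditionally on $\scp _{t}$, $B_{t}$ is uniformly distributed on $[-\cp (B)(t),\cp (B)(t)]$. By Pitman's identity \eqref{;pit}, the pair $\bigl( \{\cp (B)(s)\}_{s\le t},B_{t}\bigr)$ has the same law as $\bigl( \{R_{s}\}_{s\le t},-R_{t}+2\inf _{u\ge t}R_{u}\bigr)$. By the Markov property of the three-dimensional Bessel process at time $t$, together with \eqref{;unif}, conditionally on $\sg (R_{s},s\le t)$ the variable $\inf _{u\ge t}R_{u}$ is distributed as $R_{t}U$ with $U$ independent. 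Hence $-R_{t}+2\inf _{u\ge t}R_{u}$ is conditionally distributed as $R_{t}(2U-1)$, which is uniform on $[-R_{t},R_{t}]$. This is the extension mentioned at the start of this section. Since $R_{t}>0$ a.s., no division issues arise.

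\emph{Step 3 (conclusion).} Conditioning on $\scp _{t}$ in the expression from Step 1 gives
\begin{align*}
 \ex [F(B)]=\ex \!\left[ \int _{\R }dz\,\frac{\ind _{\{|z|\le \cp (B)(t)\}}}{2\cp (B)(t)}\,F\bigl( \Phi (\cp (B)|_{[0,t]},z)\bigr) \right] .
\end{align*}
Tonelli's theorem, valid since $F\ge 0$, then lets us exchange $\int dz$ and $\ex$. Finally, replacing $\Phi (\cp (B)|_{[0,t]},z)$ by $\cm _{z}(B)$ via \eqref{;itpit} yields the asserted formula.

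The only substantive point is Step 2, namely that the uniform conditional law persists given the whole Pitman past $\scp _{t}$ and not merely given $\cp (B)(t)$. This requires using the joint identity \eqref{;pit} at the level of processes, together with the Markov property. The rest is bookkeeping, relying on the identity \eqref{;itpit} taken from \cite{har25+}.
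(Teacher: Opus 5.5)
Your argument is correct. The paper does not prove this proposition itself; it quotes it from \cite{har25+}. Your route uses exactly the ingredients the paper records around the statement, and in effect runs the paper's derivation of \eqref{;cpit} in reverse. Those ingredients are: the uniform conditional law of $B_{t}$ given $\scp _{t}$, from \eqref{;pit}, the Markov property and \eqref{;unif}; and the $\scp _{t}$-measurable reconstruction $B=\cm _{B_{t}}(B)=\Phi \bigl( \cp (B)|_{[0,t]},B_{t}\bigr)$, from \eqref{;itd} and \eqref{;itpit}.
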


In fact, since the functional $F\bigl( \cm _{z}(B)\bigr) $ of $B$ is 
$\scp _{t}$-measurable in view of the expression~\eqref{;itpit} of 
$\cm _{z}$ for every $z\in \R $, the above proposition shows 
that 
\begin{align}\label{;cpit}
 \ex \!\left[ 
 F(B) \rmid| \scp _{t}
 \right] 
 =\frac{1}{2\cp (B)(t)}\int _{\{ |z|\le \cp (B)(t)\} }dz\,F\bigl( 
 \cm _{z}(B)
 \bigr) ,\quad \text{a.s.};
\end{align}
notice that the conditional expectation on the left-hand side is 
well-defined thanks to the conditional monotone convergence theorem 
even if $F(B)$ is not integrable. Also indicated is that, on the event that 
$\cp (B)(t)\ge |z|$, we have $\cm _{z}(B)(t)=z$ a.s., which is indeed 
the case as the following lemma shows:

\begin{lemma}[{\cite[Lemmas~2.5 and 2.7]{har25+}}]\label{;ltv}
For every pair $(z,\phi )\in \R \times \ctd{t}{}$ satisfying 
\begin{align*}
 |z-\phi _{0}|\le \cp (\phi )(t)-\phi _{0}, 
\end{align*}
we have 
\begin{enumerate}[(i)]{}
 \item[\thetag{i}] $\cm _{z}(\phi )(0)=\phi _{0}$ and $\cm _{z}(\phi )(t)=z$;
 
 \item[\thetag{ii}] $\cp (\cm _{z}(\phi ))=\cp (\phi )$ and in particular, by expression~\eqref{;itpit},
 \begin{align*}
  \cm _{z'}(\cm _{z}(\phi ))=\cm _{z'}(\phi )
 \end{align*}
 for all $z'\in \R $.
\end{enumerate}
\end{lemma}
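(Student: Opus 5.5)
The plan is to work entirely from the expression \eqref{;itpit}, which writes $\cm _{z}(\phi )$ as a functional of $p:=\cp (\phi )$ alone. Throughout I use three elementary facts: $p(0)=\phi _{0}$; $p(u)\ge \max _{0\le v\le u}\phi _{v}\ge \phi _{0}$ for all $u$; and $\kappa (s):=\min _{s\le u\le t}p(u)$ is continuous and nondecreasing in $s$. I also set
\begin{align*}
 m(s):=\min \bigl\{ 2\kappa (s),\,p(t)+z\bigr\} ,
\end{align*}
which is nondecreasing, so that $\cm _{z}(\phi )(s)=-p(s)+m(s)$.

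For (i), evaluate at the endpoints. At $s=t$ we have $\kappa (t)=p(t)$, so $\cm _{z}(\phi )(t)=\min \{ p(t),z\} $. The hypothesis gives $z-\phi _{0}\le p(t)-\phi _{0}$, that is $z\le p(t)$, so the value is $z$. At $s=0$ we have $\kappa (0)=\phi _{0}$, since $p\ge \phi _{0}$ and $p(0)=\phi _{0}$. Hence $\cm _{z}(\phi )(0)=-\phi _{0}+\min \{ 2\phi _{0},p(t)+z\} $. The other half of the hypothesis, $\phi _{0}-z\le p(t)-\phi _{0}$, says $p(t)+z\ge 2\phi _{0}$, so the value is $\phi _{0}$.

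For (ii), put $\psi :=\cm _{z}(\phi )$. Since $\cp (\psi )(s)=2\max _{0\le u\le s}\psi _{u}-\psi _{s}=2\max _{0\le u\le s}\psi _{u}+p(s)-m(s)$, it suffices to show
\begin{align*}
 \max _{0\le u\le s}\psi _{u}=\tfrac{1}{2}m(s)\quad \text{for every } s\in [0,t].
\end{align*}
The upper bound is direct. For any $u$, $p(u)\ge \kappa (u)$, so $\psi _{u}\le m(u)-\kappa (u)$. Because $m(u)\le 2\kappa (u)$, this gives $m(u)-\kappa (u)\le m(u)/2$. By monotonicity of $m$, $\psi _{u}\le m(u)/2\le m(s)/2$ for all $u\le s$.

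The main step is attainment. Fix $s$ and let $c:=\tfrac{1}{2}m(s)=\kappa (s)\wedge \tfrac{1}{2}(p(t)+z)$. Part (i)'s computation gives $c\ge \phi _{0}=p(0)$, and also $c\le \kappa (s)$. Let $r:=\max \{ u\le s:\,p(u)\le c\} $, which exists because $p(0)\le c$.
\begin{itemize}
 \item On $(r,s]$ we have $p>c$, and on $[s,t]$ we have $p\ge \kappa (s)\ge c$.
 \item If $r<s$, continuity gives $p(r)=c$. If $r=s$, then $p(s)\le c\le \kappa (s)\le p(s)$, so again $p(r)=c$.
 \item Together these show $\kappa (r)=c=p(r)$.
\end{itemize}
Then $m(r)=\min \{ 2c,p(t)+z\} =2c$ because $2c\le p(t)+z$. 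Therefore $\psi _{r}=2c-c=c=m(s)/2$, which proves the claim and hence $\cp (\psi )=p$.

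Finally, the identity $\cm _{z'}(\cm _{z}(\phi ))=\cm _{z'}(\phi )$ is immediate from \eqref{;itpit}. The right-hand side of \eqref{;itpit} depends on its argument only through $\cp $ of that argument, and $\cp (\cm _{z}(\phi ))=\cp (\phi )$.
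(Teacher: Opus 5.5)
Your proof is correct. Note that the paper itself contains no argument for this lemma; it is quoted from \cite[Lemmas~2.5 and 2.7]{har25+}. Your proposal is therefore a self-contained substitute rather than a variant of a proof in the text.

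Its only external input is expression \eqref{;itpit}, which the paper likewise only cites. If you want the argument fully self-contained, \eqref{;itpit} follows from the finite-horizon form of \lref{;ldet}, namely $\min_{s\le u\le t}\cp(\phi)(u)=M_s+(M_s-M'_s)_+$ with $M_s=\max_{0\le u\le s}\phi_u$ and $M'_s=\max_{s\le u\le t}\phi_u$. Treat the cases $M_s\ge M'_s$ and $M_s<M'_s$ separately and use $2(a\vee b)=a+b+|a-b|$.

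What your route buys is that everything is expressed through $p=\cp(\phi)$ alone. Part (i) becomes a two-line endpoint evaluation. Part (ii) reduces to the identity $\max_{0\le u\le s}\psi_u=\tfrac12 m(s)$, which you settle with the bound $\psi\le m-\kappa\le m/2$ and the last time $r\le s$ at which $p\le c$.

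Your argument also makes visible that (ii) uses only the half $\cp(\phi)(t)+z\ge 2\phi_0$ of the hypothesis; it is needed to guarantee $c\ge p(0)$, so that $r$ exists. The other half, $z\le\cp(\phi)(t)$, matters only for the terminal value in (i). Indeed, for $z\ge\cp(\phi)(t)$ one has $\cm_z(\phi)=\cm_{\cp(\phi)(t)}(\phi)=-\cp(\phi)+2\kappa$.
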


Consequently, from \eqref{;cpit}, we see in particular that, for every nonnegative 
measurable function $f$ on $\R $, 
\begin{align*}
 \ex \!\left[ 
 f(B_{t})\rmid| \scp _{t}
 \right] 
 =\frac{1}{2\cp (B)(t)}\int _{\{ |z|\le \cp (B)(t)\} }dz\,f(z)
 ,\quad \text{a.s.}
\end{align*}
In this section, we apply \pref{;prxx} to develop similar results for 
$\{ R^{x}_{s}\} _{0\le s\le t}$ given $\cq (R^{x})$ up to time $t$.

Define 
\begin{align*}
 \scq ^{x}_{s}:=\sg \bigl( 
 \cq (R^{x})(u),u\le s
 \bigr) ,\quad s\ge 0.
\end{align*}
Pick $y>0$ arbitrarily. First we discuss the conditional law of the 
terminal value $R^{x}_{t}$ given $\scq ^{x}_{t}$ with $\cq (R^{x})(t)=y$. 
To this end, we observe from \pref{;prxx} that the conditional in law 
in question is the same as the law of the random variable $\xi $ given by 
\begin{align*}
 \xi =x+y-2\bigl( 
 (xU)\wedge (yU')
 \bigr) ,
\end{align*}
where $U'$ is independent of $U$ and $U'\eqd U$, which is due to the 
fact that 
\begin{align*}
 \inf _{s\ge t}R_{s}\eqd yU
\end{align*}
given $\sg (R_{s},s\le t)$ with $R_{t}=y$, thanks to the Markov 
property of the three-dimensional Bessel process and identity~\eqref{;unif}. 
Notice that, a.s., 
\begin{align*}
 x+y&\ge \xi \\
 &\ge x+y-2(x\wedge y)\\
 &=|x-y|,
\end{align*}
and that, for every $|x-y|\le z\le x+y$, 
\begin{align*}
 \pr \!\left( 
 \xi \le z
 \right) &=\pr \Bigl(  
 (xU)\wedge (xU')\ge \frac{x+y-z}{2}
 \Bigr) \\
 &=\Bigl( 
 1-\frac{x+y-z}{2x}
 \Bigr) \!\Bigl( 
 1-\frac{x+y-z}{2y}
 \Bigr) \\
 &=\frac{z^{2}-(x-y)^{2}}{4xy}
\end{align*}
by the independence of $U$ and $U'$. Consequently, we obtain 

\begin{proposition}\label{;pcdd}
For every $y>0$, the conditional law of $R^{x}_{t}$ given 
$\scq ^{x}_{t}$ with $\cq (R^{x})(t)=y$ has density  
\begin{align*}
 \frac{z}{2xy}\ind _{[|x-y|,x+y]}(z),\quad z\in \R ;
\end{align*}
that is, for every nonnegative function $f$ on $\R $, 
\begin{align}\label{;cdd}
 \ex \!\left[ 
 f(R^{x}_{t})\rmid| \scq ^{x}_{t}
 \right] 
 =\frac{1}{2x\cq (R^{x})(t)}
 \int _{\left\{ 
 \left| 
 x-\cq (R^{x})(t)
 \right| \le z\le x+\cq (R^{x})(t)
 \right\} }dz\,zf(z),\quad \text{a.s.}
\end{align}
\end{proposition}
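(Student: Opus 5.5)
By \pref{;prxx}, the pair $(R^{x},\cq (R^{x}))$ has the same law as $(\Xi ^{(x)}(U,R),R)$, with $U$ independent of $R$ and $\Xi ^{(x)}$ as in the proof of \tref{;tmain}. So it suffices to prove the statement for the representation: for any bounded measurable $G$ on $\ctd{t}{}$ and nonnegative $f$,
\begin{align*}
 \ex \!\left[ f(R^{x}_{t})G(\cq (R^{x})|_{[0,t]})\right]
 =\ex \!\left[ f\bigl( R_{t}-2((xU)\wedge \inf _{s\ge t}R_{s})+x\bigr) G(R|_{[0,t]})\right] .
\end{align*}
The right-hand side only involves $R$ up to time $t$ through $G(R|_{[0,t]})$ and $R_{t}$, together with the two extra variables $U$ and $\inf _{s\ge t}R_{s}$. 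I will integrate these out conditionally on $\sg (R_{s},s\le t)$.

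First, by the Markov property of the three-dimensional Bessel process and \eqref{;unif}, conditionally on $\sg (R_{s},s\le t)$ the variable $\inf _{s\ge t}R_{s}$ is distributed as $R_{t}U'$. Here $U'$ is uniform on $[0,1]$ and independent of $U$ and of $\{R_{s}\}_{s\le t}$, since $U$ is independent of $R$ altogether. Hence the right-hand side equals $\ex [h(R_{t})G(R|_{[0,t]})]$, where for $y>0$
\begin{align*}
 h(y):=\ex \!\left[ f\bigl( x+y-2((xU)\wedge (yU'))\bigr) \right] .
\end{align*}
Since $R_{t}>0$ a.s., $y>0$ is all that is needed.

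Next, compute $h(y)$ explicitly as $\int f(z)\,\frac{z}{2xy}\ind _{[|x-y|,x+y]}(z)\,dz$. This is the distribution function computation already carried out before the statement: for $|x-y|\le z\le x+y$,
\begin{align*}
 \pr (\xi \le z)=\Bigl( 1-\frac{x+y-z}{2x}\Bigr) \Bigl( 1-\frac{x+y-z}{2y}\Bigr) =\frac{z^{2}-(x-y)^{2}}{4xy}.
\end{align*}
This uses only the independence of $U$ and $U'$. Differentiating in $z$ gives the density $z/(2xy)$ on $[|x-y|,x+y]$. There is no atom, because the distribution function is continuous, equal to $0$ at $z=|x-y|$ and to $1$ at $z=x+y$.

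Finally, transfer back using \pref{;prxx} once more, read in the opposite direction: $\ex [h(R_{t})G(R|_{[0,t]})]=\ex [h(\cq (R^{x})(t))G(\cq (R^{x})|_{[0,t]})]$. Since $h(\cq (R^{x})(t))$ is $\scq ^{x}_{t}$-measurable and $G$ ranges over all bounded measurable functionals generating $\scq ^{x}_{t}$, this identifies the conditional expectation and yields \eqref{;cdd}. For nonnegative, possibly non-integrable $f$, I would apply the argument to $f\wedge n$ and use conditional monotone convergence.

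The only delicate step is the conditional independence claim: given $\{R_{s}\}_{s\le t}$, the future infimum is distributed as $R_{t}U'$, jointly independent of $U$. This follows from the Markov property applied at the fixed time $t$ together with the independence of $U$ and $R$. The rest is the elementary computation above.
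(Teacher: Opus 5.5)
Your proof is correct and follows the paper's argument. You transfer to the representation of \pref{;prxx}, use the Markov property and \eqref{;unif} to replace $\inf_{s\ge t}R_s$ by $R_tU'$ given the past, compute the law of $\xi=x+y-2((xU)\wedge(yU'))$, and transfer back via $\cq(R^x)\eqd R$; you just spell out the test-functional identification of the conditional expectation that the paper compresses into one sentence (and your computation correctly uses $(xU)\wedge(yU')$, where the paper's display has the typo $(xU)\wedge(xU')$).
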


For every $z\in \R $, define, as in \cite{har25+}, 
$\cmb _{z}\colon \ctd{t}{}\to \ctd{t}{}$ by 
\begin{align}\label{;cmbz}
 \cmb _{z}(\phi ):=-\cm _{-z}(-\phi ),\quad 
 \phi \in \ctd{t}{},
\end{align}
so that 
\begin{equation*}
 \cmb _{z}(\phi )(s)=\phi _{s}-\frac{\phi _{t}-z}{2}+\left| 
 \frac{\phi _{t}-z}{2}+\min _{0\le u\le s}\phi _{u}-\min _{s\le u\le t}\phi _{u}
 \right| 
 -\Bigl| 
 \min _{0\le u\le s}\phi _{u}-\min _{s\le u\le t}\phi _{u}
 \Bigr| 
\end{equation*}
for all $0\le s\le t$ and $\phi \in \ctd{t}{}$. Noting that 
the relation 
\begin{align*}
 \left| 
 x-\cq (R^{x})(t)
 \right| \le z\le x+\cq (R^{x})(t)
\end{align*}
in \eqref{;cdd} is equivalently stated as 
\begin{align}\label{;event}
 |x-z|\le \cq (R^{x})(t)\le x+z,
\end{align}
we obtain the following disintegration formula for the law of 
$\{ R^{x}_{s}\} _{0\le s\le t}$ similar to \pref{;ppit}, 
which is of interest in its own right:

\begin{proposition}\label{;pdisint}
For every nonnegative measurable function $F$ on 
$\ctd{t}{}$, we have 
\begin{align}\label{;disint}
 \ex \!\left[ 
 F(R^{x})
 \right]
 =\int _{\R _{+}}dz\,\frac{z}{2x}
 \ex \!\left[ 
 \frac{1}{\cq (R^{x})(t)}
 F\bigl( 
 \cmb _{z}(R^{x})
 \bigr) ;\,|x-z|\le \cq (R^{x})(t)\le x+z
 \right] , 
\end{align}
or, equivalently, 
\begin{align*}
 \ex \!\left[ 
 F(R^{x})\rmid| \scq ^{x}_{t}
 \right] 
 =\frac{1}{2x\cq (R^{x})(t)}
 \int _{\left\{ 
 \left| 
 x-\cq (R^{x})(t)
 \right| \le z\le x+\cq (R^{x})(t)
 \right\} }dz\,zF\bigl( 
 \cmb _{z}(R^{x})
 \bigr) ,\quad \text{a.s.}
\end{align*}
\end{proposition}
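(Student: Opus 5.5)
We transfer Proposition~\ref{;ppit} (in the form~\eqref{;cpit}) from $B$ to $R^{x}$. The role of $\cp$ is taken by $\cq$, and the role of $\cm_{z}$ by $\cmb_{z}$. Since $\cq(\phi)=\cp(-\phi)+\phi_{0}$ and $\cmb_{z}(\phi)=-\cm_{-z}(-\phi)$, expression~\eqref{;itpit} gives
\begin{align*}
 \cmb _{z}(\phi )(s)=\cp (-\phi )(s)-\min \Bigl\{ 2\min _{s\le u\le t}\cp (-\phi )(u),\cp (-\phi )(t)-z\Bigr\} .
\end{align*}
So $\cmb_{z}(R^{x})$ is a functional of $z$ and $\{\cq(R^{x})(u)\}_{u\le t}$, and hence is $\scq^{x}_{t}$-measurable. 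This shows the two displayed forms are equivalent: integrating the conditional formula over the law of $\cq(R^{x})(t)$ and using~\eqref{;event} yields~\eqref{;disint}. It therefore suffices to prove the conditional formula.

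\textbf{Step 1.} We show that, for each $z$ in the range~\eqref{;event}, the path $\cmb_{z}(R^{x})$ is the unique path on $[0,t]$ which starts at $x$, ends at $z$, and has the same $\cq$-transform as $R^{x}$ on $[0,t]$. Applying Lemma~\ref{;ltv} to $-R^{x}$ with $-z$, we need
\begin{align*}
 |{-z}+x|\le \cp (-R^{x})(t)+x=\cq (R^{x})(t),
\end{align*}
which is exactly one half of~\eqref{;event}. We then obtain $\cmb_{z}(R^{x})(0)=x$, $\cmb_{z}(R^{x})(t)=z$, and $\cq(\cmb_{z}(R^{x}))=\cq(R^{x})$. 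By~\eqref{;itd} and part (ii) of that lemma, $\cmb_{R^{x}_{t}}(R^{x})=R^{x}$ and $\cmb_{z}(R^{x})=\cmb_{z}(\cmb_{z'}(R^{x}))$. Hence $R^{x}=\cmb_{R^{x}_{t}}(R^{x})$ is a deterministic function of $\bigl(\{\cq(R^{x})(u)\}_{u\le t},R^{x}_{t}\bigr)$.

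\textbf{Step 2.} From Step 1,
\begin{align*}
 \ex \!\left[ F(R^{x})\rmid| \scq ^{x}_{t}\right] =\ex \!\left[ G\bigl(\cq(R^{x})|_{[0,t]},R^{x}_{t}\bigr)\rmid| \scq ^{x}_{t}\right]
\end{align*}
with $G(q,z)=F(\cmb_{z}\text{ built from }q)$. The proof is then complete once we know that the conditional law of $R^{x}_{t}$ given $\scq^{x}_{t}$ is the density of Proposition~\ref{;pcdd}. For this we need a slightly stronger statement than that proposition: a conditional version with a jointly measurable integrand $G(q,z)$. I would obtain it from Proposition~\ref{;prxx}. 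In the representation there, $\cq(R^{x})$ corresponds to $R$, and
\begin{align*}
 R^{x}_{t}\;\longleftrightarrow\; R_{t}-2\bigl((xU)\wedge\inf_{s\ge t}R_{s}\bigr)+x .
\end{align*}
By the Markov property and~\eqref{;unif}, conditionally on $\sg(R_{s},s\le t)$ the variable $\inf_{s\ge t}R_{s}$ is distributed as $R_{t}U'$, independent of $U$. So the conditional law of $R^{x}_{t}$ given the whole past $\scq^{x}_{t}$, and not merely given $\cq(R^{x})(t)$, is that of $\xi$ with $y=\cq(R^{x})(t)$. The computation preceding Proposition~\ref{;pcdd} then gives the density $z/(2xy)$ on $[|x-y|,x+y]$. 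Since the conditional law is a regular kernel depending on $\scq^{x}_{t}$ only through $y$, the jointly measurable $G$ can be handled by a standard monotone class argument, which yields the conditional formula. Integrating it gives~\eqref{;disint}.

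\textbf{Main obstacle.} The delicate point is Step 1. First, the hypothesis of Lemma~\ref{;ltv} must be verified almost surely for all relevant $z$; the strict-positivity and boundary cases of~\eqref{;event} form a null set in $z$ and can be ignored. Second, and more importantly, the range~\eqref{;event} must match exactly the values $z$ for which $\cmb_{z}$ preserves $\cq$, so that no mass is lost or double counted. The measurability of $(z,\omega)\mapsto\cmb_{z}(R^{x})$ follows from the continuity of $\cmb_{z}$ in $(z,\phi)$.
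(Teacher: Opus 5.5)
Your proposal is correct and follows the paper's route: write $R^{x}=\cmb_{R^{x}_{t}}(R^{x})$ via \eqref{;itd}, note from \eqref{;itpit} that $\cmb_{z}(R^{x})$ is $\scq^{x}_{t}$-measurable, and integrate $z$ against the conditional density of $R^{x}_{t}$ given $\scq^{x}_{t}$ from Proposition~\ref{;pcdd}, which you re-derive from Proposition~\ref{;prxx} and the Markov property exactly as the paper does. Lemma~\ref{;ltv} and your ``main obstacle'' are not actually needed, and the claim that the ranges ``match exactly'' is false but unused: the integration range is just the support $[|x-y|,x+y]$ of that density, with $y=\cq(R^{x})(t)$, and when $y>x$ it is strictly smaller than the set $\{z:|z-x|\le y\}$ on which $\cmb_{z}$ preserves $\cq$.
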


Notice that, on the event~\eqref{;event}, we have, a.s.,  
\begin{align}\label{;tv}
 \cmb _{z}(R^{x})(0)=x && \text{and} && \cmb _{z}(R^{x})(t)=z.
\end{align}
In fact, regarding $\cq$ as a transformation on 
$\ctd{t}{}$ as well, we have the following restatement of \lref{;ltv} in terms of 
$\cmb _{z}$ and $\cq $: 

\begin{lemma}\label{;ltvd}
For every pair $(z,\phi )\in \R \times \ctd{t}{}$ satisfying 
\begin{align*}
 |z-\phi _{0}|\le \cq (\phi )(t), 
\end{align*}
we have 
\begin{enumerate}[(i)]{}
 \item[\thetag{i}] $\cmb _{z}(\phi )(0)=\phi _{0}$ and $\cmb _{z}(\phi )(t)=z$;
 
 \item[\thetag{ii}] $\cq \bigl( \cmb _{z}(\phi )\bigr) =\cq (\phi )$ and 
 $\cmb _{z'}\bigl( \cmb _{z}(\phi )\bigr) =\cmb _{z'}(\phi )$ for all $z'\in \R $.
\end{enumerate}
\end{lemma}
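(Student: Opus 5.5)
The plan is to reduce \lref{;ltvd} to \lref{;ltv} by the reflection $\phi \mapsto -\phi $ built into the definition~\eqref{;cmbz} of $\cmb _{z}$. Fix $(z,\phi )$ with $|z-\phi _{0}|\le \cq (\phi )(t)$ and put $\psi :=-\phi $ and $w:=-z$. By the definition~\eqref{;cq} of $\cq $,
\begin{align*}
 \cq (\phi )(t)=\cp (-\phi )(t)+\phi _{0}=\cp (\psi )(t)-\psi _{0},
\end{align*}
and $|w-\psi _{0}|=|z-\phi _{0}|$. Hence the hypothesis of \lref{;ltvd} for $(z,\phi )$ is exactly the hypothesis $|w-\psi _{0}|\le \cp (\psi )(t)-\psi _{0}$ of \lref{;ltv} for $(w,\psi )$. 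This observation is the main step; everything else is bookkeeping.

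For part (i), \lref{;ltv}(i) applied to $(w,\psi )$ gives $\cm _{w}(\psi )(0)=\psi _{0}$ and $\cm _{w}(\psi )(t)=w$. Since $\cmb _{z}(\phi )=-\cm _{-z}(-\phi )=-\cm _{w}(\psi )$, we obtain $\cmb _{z}(\phi )(0)=-\psi _{0}=\phi _{0}$ and $\cmb _{z}(\phi )(t)=-w=z$.

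For the first assertion of part (ii), \lref{;ltv}(ii) gives $\cp (\cm _{w}(\psi ))=\cp (\psi )$, that is, $\cp \bigl( -\cmb _{z}(\phi )\bigr) =\cp (-\phi )$. By (i) the paths $\cmb _{z}(\phi )$ and $\phi $ have the same initial value $\phi _{0}$, so adding $\phi _{0}$ to both sides yields $\cq \bigl( \cmb _{z}(\phi )\bigr) =\cq (\phi )$. Note that the compatibility of starting values established in (i) is needed here, since $\cq $ involves the initial value.

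For the second assertion of part (ii), \lref{;ltv}(ii) gives $\cm _{w'}(\cm _{w}(\psi ))=\cm _{w'}(\psi )$ for every $w'\in \R $. Taking $w'=-z'$ and using $\cm _{w}(\psi )=-\cmb _{z}(\phi )$, the left-hand side becomes
\begin{align*}
 \cm _{-z'}\bigl( -\cmb _{z}(\phi )\bigr) =-\cmb _{z'}\bigl( \cmb _{z}(\phi )\bigr) ,
\end{align*}
while the right-hand side is $\cm _{-z'}(-\phi )=-\cmb _{z'}(\phi )$. Changing signs gives the claim. I expect no genuine obstacle; the only point needing care is translating the hypothesis correctly through the $\phi _{0}$ shift in $\cq $.
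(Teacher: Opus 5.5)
Your proposal is correct and follows the paper's route: the paper introduces this lemma simply as a restatement of \lref{;ltv} under the reflection $\phi\mapsto-\phi$, $z\mapsto-z$ built into the definition of $\cmb_{z}$. Your write-up supplies the bookkeeping the paper leaves implicit, including the use of (i) to match the initial values inside $\cq$.
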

As a consequence, \pref{;pcdd} may be recovered from 
\pref{;pdisint} by taking $F(\phi )=f(\phi _{t}),\,\phi \in \ctd{t}{}$.

\pref{;pdisint} has an application to a conditional identity in law 
for three-dimensional Bessel bridges with a 
common starting point. For every $y,z\ge 0$, we designate by 
$\pbes{y}{z}$ the law of a three-dimensional Bessel bridge from $y$ to $z$ over 
$[0,t]$, under which we write $r=\{ r_{s}\} _{0\le s\le t}$ for the coordinate process 
in $\ctd{t}{}$: 
\begin{align*}
 r_{s}(\phi ):=\phi _{s},\quad 0\le s\le t,\ \phi \in \ctd{t}{}. 
\end{align*}
For the definition of Bessel bridges of positive dimension, 
see \cite[Section~XI.3]{ry}. We also write $\ebes{y}{z}$ for the expectation 
with respect to $\pbes{y}{z}$. Pick a Borel-measurable subset $\Gamma $ 
of $\R _{+}$ arbitrarily and consider in formula~\eqref{;disint} the 
function $F$ of the form 
\begin{align*}
 F(\phi )=\ind _{\Gamma }(\phi _{t})G(\phi ),\quad \phi \in \ctd{t}{},
\end{align*} 
with $G$ a bounded and continuous function. By the second 
equality in \eqref{;tv}, it then follows that 
\begin{equation}\label{;disintd}
\begin{split}
 &\int _{\Gamma }\ebes{x}{z}\!\left[ 
 G(r)
 \right] \pr (R^{x}_{t}\in dz)\\
 &=\int _{\Gamma }dz\,\frac{z}{2x}
 \ex \!\left[ 
 \frac{1}{\cq (R^{x})(t)}
 G\bigl( 
 \cmb _{z}(R^{x})
 \bigr) ;\,|x-z|\le \cq (R^{x})(t)\le x+z
 \right] .
\end{split}
\end{equation}
Notice that, thanks to the boundedness and continuity of 
$G$, the function 
\begin{align*}
 \R _{+}\ni z\mapsto 
 \ebes{x}{z}\!\left[ 
 G(r)
 \right] 
\end{align*}
is continuous since the mapping 
$\R _{+}\ni z\mapsto \pbes{x}{z}$ is continuous with respect to 
the topology of weak convergence; see, e.g., \cite[Subsection~2.2]{cub}. 
In view of the definition~\eqref{;cmbz} of transformations $\cmb _{z}$ and \pref{;prxx}, 
it is not difficult to see that the function 
\begin{align*}
 \R _{+}\ni z\mapsto  \ex \!\left[ 
 \frac{1}{\cq (R^{x})(t)}
 G\bigl( 
 \cmb _{z}(R^{x})
 \bigr) ;\,|x-z|\le \cq (R^{x})(t)\le x+z
 \right] 
\end{align*}
is also continuous thanks to the boundedness and continuity 
of $G$. Recall the fact that 
\begin{align*}
 \pr (R^{x}_{t}\in dz)
 =\sqrt{\frac{2}{\pi t}}\frac{z}{x}\exp \left( 
 -\frac{x^{2}+z^{2}}{2t}
 \right) \sinh \frac{xz}{t}\,dz,\quad z\ge 0
\end{align*}
(see, e.g., \cite[Sections~VI.3 and XI.1]{ry} and \cite[p.~619]{cub}). Summing up the above 
observations, we see from \eqref{;disintd} that 
\begin{align*}
 &\ebes{x}{z}\!\left[ 
 G(r)
 \right] \sqrt{\frac{2}{\pi t}}\frac{z}{x}\exp \left( 
 -\frac{x^{2}+z^{2}}{2t}
 \right) \sinh \frac{xz}{t}\\
 &=\frac{z}{2x}
 \ex \!\left[ 
 \frac{1}{\cq (R^{x})(t)}
 G\bigl( 
 \cmb _{z}(R^{x})
 \bigr) ;\,|x-z|\le \cq (R^{x})(t)\le x+z
 \right] ,
\end{align*}
which holds true for all $z\ge 0$, not only for a.e.\ $z\ge 0$, by the 
continuity of both sides in $z$. By the standard density argument, 
the last relation extends to 

\begin{proposition}\label{;pbridge}
For every $z\ge 0$, it holds that 
\begin{align*}
 &\ebes{x}{z}\!\left[ 
 F(r)
 \right] \sqrt{\frac{2}{\pi t}}\left\{ 
 \exp \left( 
 -\frac{|x-z|^{2}}{2t}
 \right) 
 -\exp \left( 
 -\frac{(x+z)^{2}}{2t}
 \right) 
 \right\} 
 \\
 &=\ex \!\left[ 
 \frac{1}{\cq (R^{x})(t)}
 F\bigl( 
 \cmb _{z}(R^{x})
 \bigr) ;\,|x-z|\le \cq (R^{x})(t)\le x+z
 \right] 
\end{align*}
for any nonnegative measurable function $F$ on $\ctd{t}{}$.
\end{proposition}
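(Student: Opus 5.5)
The identity displayed just before the statement already gives the claim for bounded continuous $G$, for every $z\ge 0$. I will first simplify the constant and then extend from bounded continuous $G$ to arbitrary nonnegative measurable $F$.

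For the constant, divide both sides by $z/(2x)$ when $z>0$. The left-hand side then becomes $\ebes{x}{z}[G(r)]\cdot 2\sqrt{2/(\pi t)}\,e^{-(x^{2}+z^{2})/(2t)}\sinh(xz/t)$. Since
\begin{align*}
 2e^{-(x^{2}+z^{2})/(2t)}\sinh \frac{xz}{t}
 =\exp \left( -\frac{(x-z)^{2}}{2t}\right) -\exp \left( -\frac{(x+z)^{2}}{2t}\right) ,
\end{align*}
this is exactly the asserted identity for $G$ and $z>0$. The case $z=0$ cannot be obtained by dividing. There the left side of the target vanishes, since the curly bracket is $0$. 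The right side also vanishes, because the event $\{x\le \cq (R^{x})(t)\le x\}$ has probability zero: $\cq (R^{x})(t)\eqd R_{t}$ by Theorem~\ref{;tmain}, and $R_{t}$ has a density. So both sides are zero at $z=0$ for every $F$, and from now on $z>0$.

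For the extension, fix $z>0$ and define two finite measures on $\ctd{t}{}$. Let $\mu _{1}(F)$ be the left-hand side of the target and $\mu _{2}(F)$ the right-hand side. Then $\mu _{1}$ is a positive multiple of $\pbes{x}{z}$. For $\mu _{2}$, on the event in question the quantity $1/\cq (R^{x})(t)$ is bounded by $1/|x-z|$ when $z\ne x$. When $z=x$, finiteness of $\mu _{2}$ follows from $G\equiv 1$ in the established identity, which gives total mass equal to the finite left side. The two measures agree on bounded continuous functions. On a metric space, finite Borel measures are determined by their integrals of bounded continuous functions, since indicators of closed sets are decreasing limits of such functions. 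Hence $\mu _{1}=\mu _{2}$. Approximating a nonnegative measurable $F$ from below by simple functions and applying monotone convergence then gives the statement for all such $F$.

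The only point needing care is the continuity in $z$ of the right-hand side for bounded continuous $G$. The paper asserts this, and it is what upgrades the a.e.\ identity to every $z$. The asserted continuity does not account for the indicator of $\{|x-z|\le \cq (R^{x})(t)\le x+z\}$, which can jump as $z$ varies. That indicator is continuous in $z$ off the null set where $\cq (R^{x})(t)\in\{|x-z|,x+z\}$, again because $\cq (R^{x})(t)\eqd R_{t}$ has a density. If I had to justify this step myself, I would use that observation together with dominated convergence. The domination comes from the bound $1/\cq (R^{x})(t)\le 1/|x-z_{0}|$ near $z_{0}\ne x$. At $z_{0}=x$, I would instead use the integrability of $1/R_{t}$ near $0$ for the three-dimensional Bessel process. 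Continuity of $z\mapsto \cmb _{z}(R^{x})$ is immediate from its explicit formula.
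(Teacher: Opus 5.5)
Your proposal is correct and follows the same route as the paper. You start from the identity for bounded continuous $G$ established just before the statement, rewrite the constant via $2e^{-(x^{2}+z^{2})/(2t)}\sinh (xz/t)=e^{-(x-z)^{2}/(2t)}-e^{-(x+z)^{2}/(2t)}$ (with the case $z=0$ trivial on both sides), and extend to nonnegative measurable $F$ by the standard fact that finite Borel measures agreeing on bounded continuous functions coincide, which is the ``standard density argument'' the paper invokes. Your dominated-convergence justification of continuity in $z$ uses that $\cq (R^{x})(t)\eqd R_{t}$ has a density and that $1/R_{t}$ is integrable. That is exactly what the paper's appeal to \pref{;prxx} supplies, so the indicator is not actually overlooked there.
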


Taking $F\equiv 1$, we see in particular that 
\begin{align*}
 &\ex \!\left[ 
 \frac{1}{\cq (R^{x})(t)};\,|x-z|\le \cq (R^{x})(t)\le x+z
 \right] \\
 &=\sqrt{\frac{2}{\pi t}}\left\{ 
 \exp \left( 
 -\frac{|x-z|^{2}}{2t}
 \right) 
 -\exp \left( 
 -\frac{(x+z)^{2}}{2t}
 \right) 
 \right\} 
\end{align*}
for all $z\ge 0$, which is indeed the case since, by the 
identity $\cq (R^{x})(t)\eqd R_{t}$ from \pref{;prxx}, the left-hand side 
is equal to 
\begin{align}\label{;sbes}
 \int _{\left\{ 
 |x-z|\le y\le x+z
 \right\} }\frac{1}{y}\pr \!\left( 
 R_{t}\in dy
 \right) 
 =\sqrt{\frac{2}{\pi t^{3}}}\int _{|x-z|}^{x+z}dy\,y\exp \left( 
 -\frac{y^{2}}{2t}
 \right) 
\end{align}
(see, e.g., \cite[p.~252]{ry} and \cite[p.~619]{cub}), 
which agrees with the right-hand side. \pref{;pbridge} reveals a 
connection among three-dimensional Bessel bridges 
with distinct end points as in the following theorem, which may 
be regarded as a complement to \cite[Theorem~3.1]{har25+}.

\begin{theorem}\label{;tbridge}
Suppose that the two positive real numbers $z$ and $z'$ 
fulfill 
\begin{align}\label{;assump}
 z'\le z && \text{and} && |x-z|\le |x-z'|.
\end{align}
Then the three-dimensional process 
\begin{align*}
 \bigl( 
 r,\cq (r),\cmb _{z}(r)
 \bigr) 
\end{align*}
under $\pbes{x}{z'}$ has the same law as the conditioned process 
\begin{align*}
 \bigl( 
 \cmb _{z'}(r),\cq (r),r
 \bigr) 
\end{align*}
on the event that 
\begin{align}\label{;eventd}
 \frac{z-z'}{2}\le \min _{0\le s\le t}r_{s}\le \frac{x+z-|x-z'|}{2}
\end{align}
under $\pbes{x}{z}$.
\end{theorem}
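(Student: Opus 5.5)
The plan is to deduce the theorem from \pref{;pbridge}, applied once with end point $z'$ and once with end point $z$. The two resulting right-hand sides should be reduced to the same expectation over $R^{x}$ by means of \lref{;ltvd}. Write
\begin{align*}
 g(w):=\sqrt{\frac{2}{\pi t}}\left\{ \exp \left( -\frac{|x-w|^{2}}{2t}\right) -\exp \left( -\frac{(x+w)^{2}}{2t}\right) \right\} ,
\end{align*}
which is strictly positive for $w>0$. Fix a nonnegative measurable $H$ on $\ctd{t}{}^{3}$.

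\textbf{Step 1 (end point $z'$).} Apply \pref{;pbridge} with end point $z'$ and $F(\phi )=H\bigl( \phi ,\cq (\phi ),\cmb _{z}(\phi )\bigr) $. This gives
\begin{align*}
 g(z')\,\ebes{x}{z'}\!\left[ H\bigl( r,\cq (r),\cmb _{z}(r)\bigr) \right]
 =\ex \!\left[ \frac{1}{\cq (R^{x})(t)}H\bigl( \cmb _{z'}(R^{x}),\cq (\cmb _{z'}(R^{x})),\cmb _{z}(\cmb _{z'}(R^{x}))\bigr) ;\,A'\right] ,
\end{align*}
where $A'=\{ |x-z'|\le \cq (R^{x})(t)\le x+z'\} $. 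On $A'$ we have $|z'-R^{x}_{0}|\le \cq (R^{x})(t)$, so \lref{;ltvd}\thetag{ii} lets us replace $\cq (\cmb _{z'}(R^{x}))$ by $\cq (R^{x})$ and $\cmb _{z}(\cmb _{z'}(R^{x}))$ by $\cmb _{z}(R^{x})$. The right-hand side therefore becomes
\begin{align*}
 E:=\ex \!\left[ \frac{1}{\cq (R^{x})(t)}H\bigl( \cmb _{z'}(R^{x}),\cq (R^{x}),\cmb _{z}(R^{x})\bigr) ;\,A'\right] .
\end{align*}

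\textbf{Step 2 (end point $z$).} Apply \pref{;pbridge} with end point $z$ and $F(\phi )=H\bigl( \cmb _{z'}(\phi ),\cq (\phi ),\phi \bigr) \ind _{D}(\phi )$, where $D$ is the event \eqref{;eventd} written in terms of $\min _{0\le s\le t}\phi _{s}$. On $A=\{ |x-z|\le \cq (R^{x})(t)\le x+z\} $, \lref{;ltvd} applied to $\phi =R^{x}$ and the end point $z$ gives three facts:
\begin{align*}
 \psi :=\cmb _{z}(R^{x}) \text{ satisfies } \psi _{0}=x,\ \psi _{t}=z,\ \cq (\psi )=\cq (R^{x}),\ \cmb _{z'}(\psi )=\cmb _{z'}(R^{x}).
\end{align*}

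\textbf{Step 3 (translating the event $D$).} By the definition \eqref{;cq} of $\cq $, evaluating at time $t$ gives
\begin{align*}
 \cq (R^{x})(t)=\cq (\psi )(t)=z-2\min _{0\le s\le t}\psi _{s}+x.
\end{align*}
Hence $\min _{s\le t}\psi _{s}\ge (z-z')/2$ is equivalent to $\cq (R^{x})(t)\le x+z'$. Likewise, $\min _{s\le t}\psi _{s}\le (x+z-|x-z'|)/2$ is equivalent to $\cq (R^{x})(t)\ge |x-z'|$. So $A\cap \{ \psi \in D\} =A\cap A'$. Under \eqref{;assump} we have $|x-z|\le |x-z'|$ and $x+z'\le x+z$, so $A'\subset A$ and the intersection is just $A'$. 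The right-hand side of Step 2 is therefore again $E$, that is,
\begin{align*}
 g(z)\,\ebes{x}{z}\!\left[ H\bigl( \cmb _{z'}(r),\cq (r),r\bigr) ;\,D\right] =E.
\end{align*}

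\textbf{Step 4 (conclusion).} Taking $H\equiv 1$ in Steps 1 and 2 shows $\pbes{x}{z}(D)=g(z')/g(z)>0$, so the conditioning is legitimate. Dividing the identity of Step 2 by $g(z)\pbes{x}{z}(D)=g(z')$ and comparing with Step 1 gives equality of the two joint laws for every $H$, which is the theorem.

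The main obstacle is the bookkeeping in Step 3. One must check that the hypothesis of \lref{;ltvd} holds on each relevant event, so that the end-point identities and the invariance of $\cq $ are available almost surely there. One must also check that the conditioning event \eqref{;eventd} corresponds exactly to $A'$, which is where the assumption \eqref{;assump} is used to get $A'\subset A$. The rest is a direct substitution into \pref{;pbridge}.
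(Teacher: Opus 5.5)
Your proposal is correct and follows essentially the paper's route: it applies \pref{;pbridge} twice (end points $z'$ and $z$), uses \lref{;ltvd} to collapse the composed maps, and uses $\cq(r)(t)=z-2\min_{0\le s\le t}r_s+x$ together with the inclusion $A'\subset A$ coming from \eqref{;assump} to identify the conditioning event. The only difference is bookkeeping: you do all simplifications on the $R^{x}$ side and meet at the common expectation $E$, whereas the paper first derives \eqref{;rel} for a single-path functional and then simplifies under $\pbes{x}{z}(\,\cdot\mid A_{z,z'})$, where it additionally needs $\cmb_{z}(r)=r$ a.s., a fact your version avoids.
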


The above theorem is not covered by \cite[Theorem~3.1]{har25+}
whose statement reduces to a trivial case if we consider a common 
starting point.

Notice that, in \eqref{;eventd}, we have 
\begin{align*}
 \frac{z-z'}{2}<\frac{x+z-|x-z'|}{2},
\end{align*}
since 
\begin{align*}
 x+z-|x-z'|-(z-z')&=x+z'-|x-z'|\\
 &>0
\end{align*}
by the positivity of $x$ and $z'$. As an example of \tref{;tbridge}, 
we let $z,z'>0$ be such that $z'\le z$ and $x=(z+z')/2$ so that 
\eqref{;assump} is fulfilled. Then, \eqref{;eventd} is reduced to 
\begin{align*}
 \frac{z-z'}{2}\le \min _{0\le s\le t}r_{s}\le \frac{z+z'}{2},
\end{align*}
for which the second inequality is satisfied $\pbes{(z+z')/2}{z}$-a.s. 
Therefore we conclude from \tref{;tbridge} that, for any 
nonnegative measurable function $F$ on $\ctd{t}{}^{3}$, 
\begin{align*}
 &\ebes{(z+z')/2}{z'}\!\left[ 
 F\bigl( 
 r,\cq (r),\cmb _{z}(r)
 \bigr) 
 \right] \\
 &=\ebes{(z+z')/2}{z}\!\left[ 
 F\bigl( 
 \cmb _{z'}(r),\cq (r),r
 \bigr) \rmid| \min _{0\le s\le t}r_{s}\ge \frac{z-z'}{2}
 \right] .
\end{align*}

\begin{proof}[Proof of \tref{;tbridge}]
For every $z\ge 0$, we write, with $x$ and $t$ being fixed, 
\begin{align*}
 q(z)=\sqrt{\frac{2}{\pi t}}\left\{ 
 \exp \left( 
 -\frac{|x-z|^{2}}{2t}
 \right) 
 -\exp \left( 
 -\frac{(x+z)^{2}}{2t}
 \right) 
 \right\} 
\end{align*}
for simplicity. Pick $z,z'>0$ fulfilling condition~\eqref{;assump}. 
Let $F$ be any nonnegative measurable function on $\ctd{t}{}$. 
By \pref{;pbridge}, we have 
\begin{align*}
 \ebes{x}{z'}\!\left[ 
 F(r)
 \right] q(z')
 =\ex \!\left[ 
 \frac{1}{\cq (R^{x})(t)}
 F\bigl( 
 \cmb _{z'}(R^{x})
 \bigr) ;\,|x-z'|\le \cq (R^{x})(t)\le x+z'
 \right] .
\end{align*}
Noting the inclusion 
\begin{align*}
\left\{ |x-z'|\le \cq (R^{x})(t)\le x+z'\right\} 
\subset 
\left\{ |x-z|\le \cq (R^{x})(t)\le x+z\right\} 
\end{align*}
and that, a.s.\ on the right-hand event, 
\begin{align*}
 \cmb _{z'}\bigl( 
 \cmb _{z}(R^{x})
 \bigr) 
 =\cmb _{z'}(R^{x}) && \text{and} && 
 \cq \bigl( 
 \cmb _{z}(R^{x})
 \bigr) 
 =\cq (R^{x})
\end{align*}
by \lref{;ltvd}, we have  
\begin{align}\label{;rel}
 \ebes{x}{z'}\!\left[ 
 F(r)
 \right] q(z')
 =\ebes{x}{z}\!\left[ 
 F\bigl( 
 \cmb _{z'}(r)
 \bigr) ;\,|x-z'|\le \cq (r)(t)\le x+z'
 \right] q(z)
\end{align}
by \pref{;pbridge} applied to the functional 
\begin{align*}
 F\bigl( \cmb _{z'}(\phi )\bigr) 
 \ind _{\left[ |x-z'|,x+z'\right] }\bigl( \cq (\phi )(t)\bigr) ,
 \quad \phi \in \ctd{t}{}.
\end{align*}
We write $A_{z,z'}$ for the event~\eqref{;eventd} so that 
\begin{align*}
 \left\{ 
 |x-z'|\le \cq (r)(t)\le x+z'
 \right\} =A_{z,z'},\quad \text{$\pbes{x}{z}$-a.s.},
\end{align*}
by the definition~\eqref{;cq} of the transformation $\cq $. Then, 
for $F$ is arbitrary, relation~\eqref{;rel} entails that the 
three-dimensional process 
\begin{align*}
 \bigl( 
 r,\cq (r),\cmb_{z}(r)
 \bigr) 
\end{align*}
under $\pbes{x}{z'}$ is identical in law with 
\begin{align*}
 \bigl( 
 \cmb _{z'}(r),
 \cq \bigl( 
 \cmb _{z'}(r)
 \bigr) ,
 \cmb _{z}\bigl( 
 \cmb _{z'}(r)
 \bigr) 
 \bigr) 
\end{align*}
under the conditional law $\pbes{x}{z}(\,\cdot \mid A_{z,z'})$. 
Since, a.s.\ on the event $A_{z,z'}$, 
\begin{align*}
 \cq \bigl( 
 \cmb _{z'}(r)
 \bigr) =\cq (r), && 
 \cmb _{z}\bigl( 
 \cmb _{z'}(r)
 \bigr) =\cmb _{z}(r),
\end{align*}
by \lref{;ltvd}, we have the conclusion of the theorem 
thanks to the fact that $\cmb _{z}(r)=r$ $\pbes{x}{z}$-a.s.\ in view of 
\eqref{;itd} and the definition~\eqref{;cmbz} of the transformation $\cmb _{z}$.
\end{proof}

We conclude this paper with a remark on relation~\eqref{;rel} in the above proof, 
as well as on another consequence of \pref{;prxx} of interest.

\begin{remark}
\thetag{1} Relation~\eqref{;rel} tells us that, for every $z,z'>0$ fulfilling 
\eqref{;assump}, 
\begin{align*}
 \pbes{x}{z}\!\left( 
 \frac{z-z'}{2}\le \min _{0\le s\le t}r_{s}\le \frac{x+z-|x-z'|}{2}
 \right) 
 &=\frac{q(z')}{q(z)}\\
 &=\exp \left( 
 \frac{z^{2}-(z')^{2}}{2t}
 \right) \frac{\sinh \frac{xz'}{t}}{\sinh \frac{xz}{t}}.
\end{align*}
Notice that $q(z')/q(z)\le 1$ since, for every $z\ge 0$, 
$q(z)$ is nothing but the right-hand side of \eqref{;sbes}.

\thetag{2} It would also be noteworthy that, by \pref{;prxx} and 
\cite[Corollary~1.2]{har25+}, the process given by 
\begin{align*}
 R^{x}_{t-s}+R^{x}_{t}-2\min _{t-s\le u\le t}R^{x}_{u},\quad 0\le s\le t,
\end{align*}
is also a standard three-dimensional Bessel process. Here, 
$x\ge 0$ and $t>0$ are arbitrarily taken.
\end{remark}


\end{document}